\documentclass[12pt]{amsart}
\usepackage{amscd,amssymb}
\usepackage{amsthm,amsmath,amssymb}
\usepackage[matrix,arrow]{xy}
\usepackage{longtable}
\usepackage{comment}
\usepackage{float}
\usepackage{graphicx}
\usepackage{booktabs}

\sloppy\pagestyle{plain}
\textwidth=16cm \textheight=21cm

\addtolength{\topmargin}{-0cm} \addtolength{\oddsidemargin}{-2cm}
\addtolength{\evensidemargin}{-2cm}

\newtheorem{example}[equation]{Example}
\newtheorem*{example*}{Example}
\newtheorem{definition}[equation]{Definition}

\newtheorem{corollary}[equation]{Corollary}
\newtheorem{proposition}[equation]{Proposition}
\newtheorem{conjecture}[equation]{Conjecture}
\newtheorem*{conjecture*}{Conjecture}

\newtheorem*{question*}{Question}

\newtheorem*{problem*}{Problem}
\newtheorem*{theorem*}{Theorem}

\theoremstyle{remark}
\newtheorem{remark}[equation]{Remark}
\newtheorem*{remark*}{Remark}

\newtheorem*{ack}{Acknowledgements}

\newcommand{\vol}{{\operatorname{Vol}}}
\newcommand{\lct}{{\operatorname{lct}}}

\makeatletter\@addtoreset{equation}{section} \makeatother

\usepackage{hyperref}
\hypersetup{colorlinks,linkcolor={blue},citecolor={red}}

\newcommand{\bQ}{\mathbb{Q}}

\newcommand{\rd}{\mathrm{d}}

\title{On the Cheltsov--Rubinstein conjecture}

\pagestyle{headings}

\begin{document}


\author[K.~Fujita]{Kento Fujita}
\address{Department of Mathematics, Graduate School of Science, Osaka University, Toyonaka,
Osaka 560-0043, Japan.}
\email{fujita@math.sci.osaka-u.ac.jp}
\author[Y.~Liu]{Yuchen Liu}
\address{Department of Mathematics, Yale University, New Haven, CT 06511, USA.}
\email{yuchen.liu@yale.edu}
\author[H.~S\"u{\ss}]{Hendrik S\"u{\ss}}
\address{School of Mathematics, The University of Manchester, Alan Turing
Building, Oxford Road Manchester M13 9PL.}
\email{hendrik.suess@manchester.ac.uk}
\author[K.~Zhang]{Kewei Zhang}
\address{Beijing International Center for Mathematical Research, Peking University, No. 5 Yiheyuan Road Haidian District, Beijing, China.}
\email{kwzhang@pku.edu.cn}
\author[Z.~Zhuang]{Ziquan Zhuang}
\address{Department of Mathematics, Princeton University, Princeton, NJ, 08544-1000.}
\email{zzhuang@math.princeton.edu}

\begin{abstract}
In this note we investigate the Cheltsov--Rubinstein conjecture. We show that this conjecture does not hold in general and some counterexamples will be presented.
\end{abstract}

\maketitle

\section{Introduction}
\label{section:intro}
In the study of canonical metrics on Fano type manifolds,
K\"ahler-Einstein edge (KEE) metrics are a natural generalization of K\"ahler-Einstein metrics: they are smooth metrics on the complement of a divisor, and have a conical singularity of angle $2\pi\beta$ transverse to that complex edge (see \cite{R14} for a survey, precise definition and references). Considerable amount of work on KEE metrics in recent years has concerned the behavior of such metrics when the cone angle is relatively large (e.g., close to $2\pi$). 

In 2013, Cheltsov--Rubinstein \cite{CR15} initiated a systematic study of the behavior in the other extreme when the \emph{cone angle $\beta$ goes to zero}. To explore this small cone angle world, it is natural to work on \emph{asymptotically log Fano varieties}, a class of varieties introduced in op. cit.

\begin{definition}{\cite{CR15}}
\label{definition:asymp-log-Fano}
Let $X$ be a normal projective variety over $\mathbb{C}$. Let $D=\sum D_i$ be an effective divisor on $X$, where each $D_i$ is a prime divisor. We say the pair $(X,D)$ is (strongly) asymptotically log Fano if the log pair $(X,\sum(1-\beta_i)D_i)$ is log Fano for (all) sufficiently small $\beta_i\in(0,1]$.
\end{definition}

Note that, if $D$ has only one component, then $(X,D)$ being (strongly) asymptotically log Fano just means that $(X,(1-\beta)D)$ is log Fano for sufficiently small $\beta$. Regarding the existence of KEE metrics on such pairs, Cheltsov--Rubinstein proposed the following conjecture.

\begin{conjecture}{\cite{CR15}}
\label{conjecture:CR-conj}
Let $(X,D)$ be a smooth asymptotically log Fano pair where $D$ is a smooth divisor.
Then $(X,(1-\beta)D)$ admits a KEE metric with sufficiently small cone angle $\beta$ along $D$ if and only if $(K_X+D)^{\dim X}=0$.
\end{conjecture}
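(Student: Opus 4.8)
I would run the argument through the logarithmic Yau--Tian--Donaldson correspondence: in the asymptotically log Fano setting, $(X,(1-\beta)D)$ admits a KEE metric of cone angle $2\pi\beta$ along $D$ if and only if it is K-polystable (Berman, Li--Tian--Wang, and others), and K-instability is detected by the valuative criterion of Fujita and Li, namely $\delta(X,(1-\beta)D)=\inf_v A_{(X,(1-\beta)D)}(v)/S_{(X,(1-\beta)D)}(v)<1$. So Conjecture~\ref{conjecture:CR-conj} becomes: $(X,(1-\beta)D)$ is K-polystable for all sufficiently small $\beta>0$ if and only if $(K_X+D)^n=0$, where $n=\dim X$. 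Since $-(K_X+D)=\lim_{\beta\to 0^+}\bigl(-(K_X+(1-\beta)D)\bigr)$ is a limit of ample classes it is always nef, so this dichotomy is exactly ``$-(K_X+D)$ nef but not big'' versus ``$-(K_X+D)$ nef and big''.

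For the ``only if'' direction I would test K-stability against the divisorial valuation $\operatorname{ord}_D$. On the smooth pair one has $A_{(X,(1-\beta)D)}(\operatorname{ord}_D)=\beta$ and
$$
S_{(X,(1-\beta)D)}(\operatorname{ord}_D)=\frac{1}{(-(K_X+(1-\beta)D))^n}\int_0^\infty \vol\bigl(-(K_X+(1-\beta)D)-tD\bigr)\,\rd t .
$$
When $(K_X+D)^n\ne 0$, i.e.\ $-(K_X+D)$ is nef and big, a change of variables shows the right-hand side converges as $\beta\to 0^+$ to $(-(K_X+D))^{-n}\int_0^\infty\vol(-(K_X+D)-sD)\,\rd s$, which is strictly positive because $-(K_X+D)$ is big; meanwhile $A_{(X,(1-\beta)D)}(\operatorname{ord}_D)=\beta\to 0$. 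Hence $A<S$ and the $\beta$-invariant of $\operatorname{ord}_D$ is negative for all sufficiently small $\beta$, so $(X,(1-\beta)D)$ is K-unstable and carries no KEE metric. I expect this half to go through unconditionally.

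The real content is the ``if'' direction: given $(K_X+D)^n=0$, construct KEE metrics for small $\beta$. Here $-(K_X+D)$ is nef but not big; if it is moreover semiample it defines a fibration $f\colon X\to Z$ whose general fibre $(F,D|_F)$ is log Calabi--Yau, and $-(K_X+(1-\beta)D)=f^*L+\beta D$ is a small perturbation of a pulled-back ample class $f^*L$. The plan would be an adiabatic-limit/fibration argument: show that as $\beta\to 0$ the desired KEE metrics collapse onto a twisted K\"ahler--Einstein metric on the base, and reduce K-polystability of $(X,(1-\beta)D)$ for small $\beta$ to a uniform stability statement for the log Calabi--Yau fibres together with the K-semistability of the base pair $(Z,B_Z+M_Z)$ produced by the canonical bundle formula $K_X+D=f^*(K_Z+B_Z+M_Z)$.

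The main obstacle --- and the place where I would look for a counterexample --- is exactly the behaviour on the base and on degenerate fibres. First, $-(K_X+D)$ need not be semiample, so the fibration may fail to exist at all; second, even when it exists, the base pair $(Z,B_Z+M_Z)$ need not be K-semistable, and such instability should propagate to $(X,(1-\beta)D)$ for every small $\beta$. Concretely I would try to engineer an asymptotically log Fano pair with $D$ smooth and $(K_X+D)^n=0$ --- say a log Calabi--Yau fibration over a deliberately K-unstable base, or a configuration with $-(K_X+D)$ nef but not semiample --- and then exhibit an explicit valuation $v$ (pulled back from the base, or extracted by a weighted blow-up over a bad fibre) whose $\beta$-invariant $A_{(X,(1-\beta)D)}(v)-S_{(X,(1-\beta)D)}(v)$ stays negative as $\beta\to 0^+$, which would falsify the ``if'' direction of Conjecture~\ref{conjecture:CR-conj}. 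The delicate computational point in any such example is the $\beta\to 0$ asymptotics of the volume integrals, all of which degenerate because $(-(K_X+D))^n=0$, so that the sign of the limiting $\beta$-invariant is governed by subleading terms.
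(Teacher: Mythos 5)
You cannot prove Conjecture~\ref{conjecture:CR-conj}, because it is false: the content of this paper is precisely a set of counterexamples, so the honest assessment of your proposal is that its final, sceptical paragraph is the part that matches reality. Moreover, the two places where you say you would hunt for a counterexample are exactly where the paper finds them. The ``log Calabi--Yau fibration over a deliberately K-unstable base'' is realized in Section~\ref{sec:high dim} by the product construction $X=X_1\times X_2$, $D=F\times X_2$ with $F\in|-K_{X_1}|$ smooth and $X_2$ a K-unstable Fano (e.g.\ $X_1=\mathbb{P}^2$, $X_2=\mathrm{Bl}_p\mathbb{P}^2$): here $(K_X+D)^{\dim X}=0$ yet $\delta(X,(1-\beta)D)\le\delta(X_2)<1$ for every $\beta$, so no KEE metric exists. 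The subtler examples, governed by the subleading $\beta$-asymptotics of degenerate volume integrals that you flag as the delicate point, are the surfaces of Section~\ref{sec:dim 2}: blowing up one or two of the tangency points $\overline{p}_0,\overline{p}_\infty$ of the ruling with the $(1,2)$-curve $\overline{C}\subset\mathbb{P}^1\times\mathbb{P}^1$ and computing $S(Z)$ for a divisor extracted over the degenerate fibre (Proposition~\ref{proposition:Z-over-S} and Corollary~\ref{corollary:delta-upper-bound}) gives $\delta(S,(1-\beta)C)<1$ for $r=1$ and strict K-semistability with discrete automorphisms for $r=2$, hence non-existence of small-angle KEE metrics (Remarks~\ref{remark:r=1} and \ref{remark:r=2}).

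Beyond the fact that, as written, your text is a programme rather than a proof or disproof (no valuation is actually computed), two specific points need correcting. First, your worry that $-(K_X+D)$ may fail to be semiample is unfounded: for an asymptotically log Fano pair it is nef and the base-point-free theorem applies, so the ample model $\phi:X\to Y$ always exists (beginning of Subsection 3.2). Second, your heuristic that instability of the base pair is the essential obstruction is only half the story: the paper proves that K-semistability of $(Y,B+M)$ is indeed necessary for small-angle KEE metrics, but the $r=1,2$ surface counterexamples show it is not sufficient --- there the failure is detected only at order $\beta$ in $A-S$ by a valuation sitting over a degenerate (tangent) fibre, not by any base instability. Your sketch of the ``only if'' direction via $\operatorname{ord}_D$ (log discrepancy $\beta\to 0$ versus a positive limit of $S$ when $-(K_X+D)$ is big) is fine, but that direction was already known (\cite{CR16,F15}) and is not where the difficulty lies.
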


One direction of this conjecture (the necessary part) has been verified by Cheltsov--Rubinstein \cite{CR16} (for dimension 2)
and Fujita \cite{F15} (for higher dimensions). Moreover, Cheltsov--Rubinstein \cite{CR15,CR16} confirmed the conjecture for all pairs in dimension 2 except one infinite family of pairs, and recently Cheltsov--Rubinstein--Zhang \cite{CRZ} confirmed the conjecture for all but 6 of these pairs. In this note we show that some of these remaining cases provide counterexamples to Conjecture \ref{conjecture:CR-conj} (see Section \ref{sec:dim 2}). In addition we provide other counterexamples in higher dimensions and  investigate the subtlety involved (see Section \ref{sec:high dim}).

\section{Examples and counterexamples in dimension 2}
\label{sec:dim 2}

\subsection{Preliminaries}
\label{sebsec:prelim-dim-2}
\hfill\\
\hfill\\
In this section, we let $(S,C)$ be a smooth asymptotically log del Pezzo pair.
Assume that $(K_S+C)^2=0$. 
For the sufficient part, in dimension 2, it is useful to
divide into two cases: when $K_S+C\sim 0$ and when $K_S+C\not\sim0$.
In the first case existence (and hence the conjecture \ref{conjecture:CR-conj}) follows from \cite[Corollary 1]{JMR} which resolved a conjecture of Donaldson \cite{Don}.
In the second case, Cheltsov--Rubinstein's classification of asymptotically
log del Pezzo surfaces \cite[Theorem 2.1]{CR15} reduces the task to $(\mathbb{F}_1,C)$ with $C\in|2Z_1+2F|$ or $(S_r,C_r)$ where $S_r$ the blow-up of $\mathbb{P}^1\times\mathbb{P}^1$ at $r$-points on a
bi-degree $(2,1)$ curve with no two on the same $(0,1)$ curve and $C_r$
its proper transform. In \cite{CR15} the first surface $(F_1,C)$ was
successfully treated using $\alpha$-invariant techniques and in a recent article \cite{CRZ} all but 6 of the pairs $(S_r,C_r)$ were handles using
$\delta$-invariant techniques (see \cite[Theorem 1.3]{CRZ}). Thus the conjecture seemed plausible, at least in dimension 2. Somewhat surprisingly, we show that nevertheless some of
$(S_1,C_1)$ and $(S_2,C_2)$ provide subtle counterexamples to Conjecture \ref{conjecture:CR-conj}.




More precisely, we will show that, for $r=1$ and $r=2$, some special configurations of $(S_r,C_r)$ are not uniformly K-stable. Here by `special' we mean that the blown up points on the $(1,2)$ curve are chosen in a specific way. 

To do this, we make use of the delta invariant defined by Fujita--Odaka \cite{FO} and we will show that, for some special $(S,C)$ from above, one has
$$\delta\big(S,(1-\beta)C\big)\leq1,\ \text{for sufficiently small $\beta$}.$$
This means that $(S,(1-\beta)C)$ is not uniformly K-stable by \cite{BJ}. Then some further argument will imply the non-existence of small cone angle KEE metrics (see Remark \ref{remark:r=1} and \ref{remark:r=2}).

To bound $\delta$-invariants from above, we use the following characterization of $\delta$-invariant (see \cite{FO,BJ}):
\begin{equation}
\label{equation:delta=A/S}
    \delta\big(S,(1-\beta)C\big)=\inf_Z\frac{A_{S,(1-\beta)C}(Z)}{S_{S,(1-\beta)C}(Z)}.
\end{equation}
$$$$
Here $Z$ runs through all the prime divisors over the surface $S$ and $A_{S,(1-\beta)C}(Z)$ denotes the log discrepancy of $Z$. For simplicity, we will write $A(Z):=A_{S,(1-\beta)C}(Z)$ in the following. Moreover, the quantity $S(Z):=S_{S,(1-\beta)C}(Z)$ is called the \emph{expected vanishing order} of $-K_S-(1-\beta)C$ along $Z$, which is defined by
$$S(Z):=\frac{1}{(-K_S-(1-\beta)C)^2}\int_0^{\tau(Z)}\vol(-K_S-(1-\beta)C-xZ)dx,$$
where $\tau(Z)$ denotes the pseudo-effective threshold of $-K_S-(1-\beta)C$ with respect to $Z$.
And as we will see, in some cases, the infimum in \eqref{equation:delta=A/S} is obtained by some specific $Z$ over $S$.

\subsection{Basic setup and notation}
\label{subsection:notaion}
\hfill\\
\hfill\\
In this subsection, we fix some notation, which will be used throughout this section.
Set
$$
\overline{S}:=\mathbb{P}^1\times\mathbb{P}^1, \qquad
\overline{C}:=\text{a smooth curve of bi-degree $(1,2)$}\subset\overline{S}.
$$
Denote by $\overline{F}$ a general vertical line of bi-degree $(1,0)$
 and by $\overline{H}$ a general horizontal line of bi-degree $(0,1)$.

let $([s:t],[u:v])$ be the bi-homogeneous coordinate system on $\overline{S}$. Then, up to a linear change of coordinates, we may assume that $\overline{C}$ is cut out by the equation $sv^2=tu^2$.


The linear system $|\overline{F}|$ contains exactly two curves
that are tangent to $\overline{C}$.
Denote them by
$
\overline{F}_0,\overline{F}_\infty,
$
and let
$$
\overline{p_0}:=\overline{F}_0\cap \overline{C},\qquad
\overline{p_\infty}:=\overline{F}_\infty\cap \overline{C}.\qquad
$$
In $([s:t],[u:v])$ coordinates, one simply has $\overline{F}_0=\{t=0\},\ \overline{F}_\infty=\{s=0\}$, $\overline p_0=([1:0],[1;0])$ and $\overline p_\infty=([0:1],[0:1])$. We also put $\overline{H}_0$ and $\overline{H}_\infty$ to be the horizontal $(0,1)$ curves that intersect $\overline{C}$ transversely at $\overline{p}_0$ and $\overline{p}_\infty$ respectively. So $\overline{H}_0=\{v=0\}$ and $\overline{H}_\infty=\{u=0\}$.

\begin{figure}[H]
\centering
\includegraphics[width=0.75\textwidth]{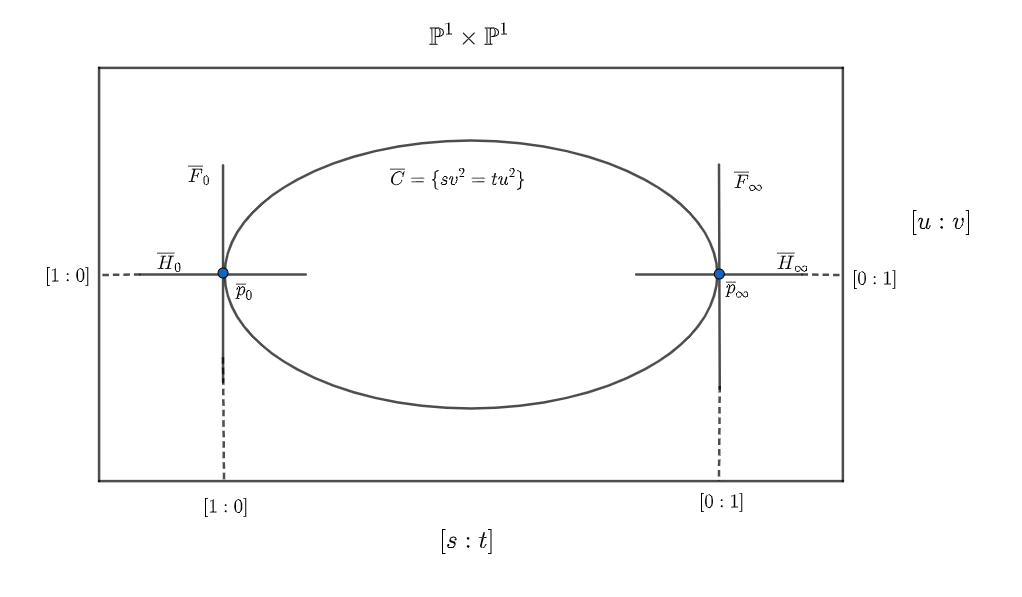}
\caption{\label{fig:S}}
\end{figure}

Choose some $r\in\mathbb{N}$ and let $\overline{F}_1,\ldots,\overline{F}_r$ be
distinct  bi-degree $(1,0)$ curves in $\overline{S}$
that are all different from the curves $\overline{F}_0$ and $\overline{F}_\infty$.
Then each intersection $\overline{F}_i\cap\overline{C}$ consists of two points.
For each $i=1,\ldots r$, let
$$
\overline{p}_i\in \overline{F}_i\cap \overline{C}
$$
be one of these two points.

Set
$$
I:=\{i_1,\ldots,i_r\}\subset \{0,1,...,r,\infty\},
$$
and let $S:=S_I$ denote the blow-up of $\overline{S}$ at the $r$ points $\{\overline p_i\}_{i\in I}\subset\{\overline p_0,\overline p_1,...,\overline p_r,\overline p_\infty\}$,
with $\pi \colon S \to \overline{S}$ being the blowup morphism.
Let us denote by
$$
E_{j}:=\pi^{-1}(\overline p_{j}), 
\qquad
j\in I, 
$$ 
the exceptional curves of $\pi$. To be precise,
we note that 
we are blowing-up $r$ of the $r+2$ points 
$\{\overline p_0,\overline p_1,...,\overline p_r,\overline p_\infty\}$.
Denote by
$$
F_0,F_1,\ldots,F_r,F_{\infty}
$$
the proper transform on the surface $S$ of the curves $\overline{F}_0,\overline{F}_1,\ldots,\overline{F}_r,\overline{F}_{\infty}$
(note that exactly $r$ of these are $-1$-curves and the remaining
two are $0$-curves). We also set $\overline{H}_i$ to be the horizontal $(0,1)$ curve passing through $\overline{p}_i$ and let $H_i$ be its proper transform on $S$.

Let $C$ be the proper transform of the curve $\overline{C}$,
so
$$C=\pi^*\overline{C}-\sum_{j\in I}E_j\sim\pi^*(\overline{F}+2\overline{H})-\sum_{j\in I}E_j.$$
For any sufficiently small rational number $\beta>0$, we put
$$\overline{L}_\beta:=-K_{\overline{S}}-(1-\beta)\overline{C}.$$
Then we have
$$\overline{L}_\beta\sim_\mathbb{Q}(1+\beta)\overline{F}+2\beta\overline{H}\sim_\mathbb{Q}\overline{F}+\beta\overline{C}.$$
Let
$$L_\beta:=-K_S-(1-\beta)C.$$
Then we have
$$L_\beta\sim_\mathbb{Q}\pi^*\overline{L}_\beta-\beta\sum_{j\in I}E_j\sim_\mathbb{Q}\pi^*\big((1+\beta)\overline{F}+2\beta\overline{H}\big)-\beta\sum_{j\in I}E_j\sim_\mathbb{Q}\pi^*\overline{F}+\beta C.$$
Note that $L_\beta$ is an ample $\mathbb{Q}$-line bundle for sufficiently small $\beta$, so that the pair $(S,C)$ is asymptotically log del Pezzo.

\subsection{Blowing up two special points}
\label{subsection:2-special}
\hfill\\
\hfill\\
In this part we set $r=2$ and $I=\{0,\infty\}$. So $(S,C)$ is obtained by blowing up $\overline{p}_0$ and $\overline{p}_\infty$. In this case, $L_\beta$ is ample for any $\beta\in(0,1]$. The main result is the following.

\begin{proposition}
\label{proposition:KEE-for-2-special}
$(S,(1-\beta)C)$ admits a KEE metric with cone angle $\beta$ along $C$ for $\beta\in(0,1]$
\end{proposition}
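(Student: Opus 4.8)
The plan is to establish existence of a KEE metric by proving that the log pair $(S,(1-\beta)C)$ is uniformly K-stable for every $\beta\in(0,1]$, and then to invoke the existence theory for conical K\"ahler--Einstein metrics (for instance via the equivalence between uniform K-stability and the existence of a KEE metric in this setting). By the valuative criterion \eqref{equation:delta=A/S}, it suffices to show $\delta(S,(1-\beta)C)>1$ for all $\beta\in(0,1]$; one can even aim to show that $\inf_\beta \delta(S,(1-\beta)C)$ stays uniformly bounded away from $1$, or at least compute the limit as $\beta\to 0$. First I would set up the geometry very explicitly: with $I=\{0,\infty\}$ the surface $S$ is the blow-up of $\mathbb P^1\times\mathbb P^1$ at $\overline p_0,\overline p_\infty$, which is a weak del Pezzo (in fact here a genuine del Pezzo of degree $6$, a toric surface), and one records the full list of negative curves: the two exceptional divisors $E_0,E_\infty$, the proper transforms $F_0,F_\infty$ of the tangent fibres (now $(-1)$-curves since $\overline C$ is tangent to them, so the intersection multiplicity is $2$), and $C$ itself with $C^2=0$. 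I would write $L_\beta\sim_{\mathbb Q}\pi^*\overline F+\beta C$ and compute $L_\beta^2$, the Zariski decompositions of $L_\beta-xZ$, and the volume function $x\mapsto\vol(L_\beta-xZ)$ for each relevant $Z$.

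The main computation is then to bound $A(Z)/S(Z)$ from below over all divisors $Z$ over $S$. Since $S$ is toric and $C$, together with the boundary we will need, is close to being torus-invariant, I expect the infimum in \eqref{equation:delta=A/S} to be computed among a short finite list of ``obvious'' divisors (the negative curves $E_0,E_\infty,F_0,F_\infty,H_0,H_\infty$, possibly $C$, and at most one or two divisors extracted by a weighted blow-up of the points $\overline p_0,\overline p_\infty$ adapted to the tangency). For each such $Z$ one computes $A(Z)=A_{S,(1-\beta)C}(Z)$ directly (e.g.\ $A(E_j)=2-(1-\beta)\,\mathrm{ord}_{E_j}(C)=2-(1-\beta)=1+\beta$ since $C$ passes once through $\overline p_j$) and $S(Z)$ from the integral of the volume, and checks that $A(Z)/S(Z)>1$. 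The cleanest route is to reduce, on the toric del Pezzo $S$, to testing only torus-invariant divisors over $S$ — by a Newton-polytope/barycenter argument (the Abban--Zhuang or Fujita--Odaka toric computation of $\delta$) — so that the whole problem becomes a finite convex-geometry estimate: $\delta$ equals the reciprocal of the ``barycenter distance'' of the polytope associated to $L_\beta$, perturbed by the non-toric divisor $C$. Because $C$ itself only contributes with coefficient $1-\beta<1$ and meets the toric structure transversally at the two blown-up points, I expect the estimate to survive with room to spare.

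The hard part, and the reason the statement is a genuine theorem rather than a quotation, is twofold. First, one must handle all $\beta\in(0,1]$ uniformly, including $\beta=1$ (where $(S,C)$ is log Fano with $\lfloor (1-\beta)C\rfloor=0$, so the KLT condition is automatic) and the degenerating regime $\beta\to 0$ (where $L_\beta\to\pi^*\overline F$ becomes only semi-ample, $C$ becomes $L_0$-trivial, and several $S(Z)$ could a priori approach $A(Z)$): I would track the $\beta$-dependence of each $A(Z)/S(Z)$ and verify the bound is $>1$ throughout, with the limit $\beta\to0$ giving a still-positive margin. Second, one must justify that no exotic divisor $Z$ over $S$ beats the finite list — this is where the toric reduction (or, failing that, an Abban--Zhuang flag/refinement argument along $C$ and along the $F_j$'s) does the real work; I would phrase it as: it is enough to bound $\delta$ along a suitable refinement of the pair by $C$ and by one fibre, reducing a two-dimensional $\delta$-estimate to one-dimensional ones on $C\cong\mathbb P^1$ and on a fibre $\cong\mathbb P^1$, both of which are elementary. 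Once $\delta(S,(1-\beta)C)>1$ is in hand for all $\beta\in(0,1]$, uniform K-stability and hence the existence of the KEE metric with cone angle $\beta$ along $C$ follows, completing the proof.
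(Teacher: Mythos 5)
Your plan hinges on proving $\delta(S,(1-\beta)C)>1$, and that cannot work: for $I=\{0,\infty\}$ the pair carries an effective $\mathbb{C}^*$-action (it is invariant under $G=\mathbb{C}^*\rtimes\mathbb{Z}_2$, precisely because the blown-up points are the two tangency points $\overline{p}_0,\overline{p}_\infty$ fixed by the torus), so it can be at best K-polystable and never uniformly K-stable. Concretely, the "room to spare" you anticipate is exactly zero: the exceptional divisor $Z$ of the blow-up of $p_0=E_0\cap C$ satisfies $A(Z)=1+\beta$ and $S(Z)=1+\beta$ (Proposition \ref{proposition:Z-over-S}(2) with $r=2$), so $A(Z)/S(Z)=1$ for every $\beta$, and indeed $\delta\big(S,(1-\beta)C\big)=1$ with the infimum in \eqref{equation:delta=A/S} attained (Corollary \ref{corollary:delta-upper-bound}(1)). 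Your computation, if carried out, would discover this divisor and the strict inequality would fail; no refinement of the finite list of test divisors can rescue the argument, because the conclusion $\delta>1$ is simply false. A secondary issue: although $S$ itself is toric (a degree $6$ del Pezzo), the boundary $C$ is a $(1,2)$-curve invariant only under a one-parameter subgroup, so the pair is not toric and the polytope/barycenter reduction of $\delta$ to torus-invariant valuations does not apply to it.

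The correct route has to exploit the symmetry rather than aim at uniform stability. The paper does this by computing Tian's invariant with respect to $G=\mathbb{C}^*\rtimes\mathbb{Z}_2$: it shows $\alpha_G\big(S,(1-\beta)C\big)=1$ (Proposition \ref{proposition:alpha_G}) by observing that any $G$-invariant $D\sim_{\mathbb{Q}}L_\beta$ is supported on $E_0+E_\infty$, $F_0+F_\infty$, $H_0+H_\infty$ and the $\mathbb{C}^*$-invariant curves $C_{[\alpha:\beta]}$, all of which pass through the two points $p_0=E_0\cap F_0$ and $p_\infty=E_\infty\cap F_\infty$; a coefficient count forces the multiplicity of $(1-\beta)C+D$ at these points to be exactly $2$, giving log canonicity for $\lambda=1$ and failure for $\lambda>1$. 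Existence of the KEE metric then follows from \cite[Theorem 2, Lemma 6.11]{JMR}, yielding log K-polystability rather than uniform K-stability. If you want to stay within the $\delta$-framework, you would need a $G$-equivariant (reduced) version of the invariant, not the plain $\delta$ you propose.
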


To show this, we use Tian's $\alpha_G$-invariant, where we take
$$
G:=\mathbb{C}^*\rtimes \mathbb{Z}_2.
$$

Note that $G\subset\operatorname{Aut}(\mathbb{P}^1)$. The action is simply given by multiplication and involution. If we embed $\mathbb{P}^1$ into $\overline{S}=\mathbb{P}^1\times\mathbb{P}^1$ as the  $(1,2)$ curve $\overline{C}$ (the map is given by $[x:y]\mapsto([x^2:y^2],[x:y])$), then the $G$-action extends to $(\overline{S},\overline{C})$. 
Namely, $G\subset\operatorname{Aut}(\overline{S},\overline{C})$. More specifically, for any $(\lambda,\iota)\in G$ and $([s:t],[u:v])\in\overline{S}$, the induced action is given by
$$
\lambda\cdot([s:t],[u:v])=([s:\lambda^2t],[u:\lambda v]),\ \iota\cdot([s:t],[u:v])=([t:s],[v:u]).
$$
This $G$-action lifts to $(S,C)$ since we are blowing up $\overline{p}_0$ and $\overline{p}_\infty$. In particular, the curves $F_0$, $E_0$, $H_0$, $F_\infty$, $E_\infty$, $H_\infty$ and $C$ are all $\mathbb{C}^*$-invariant and 
$$
\iota(F_0)=F_\infty,\ \iota(E_0)=E_\infty,\ \iota(H_0)=H_\infty,\ \iota(C)=C.
$$

\begin{figure}[H]
\centering
\includegraphics[width=0.7\textwidth]{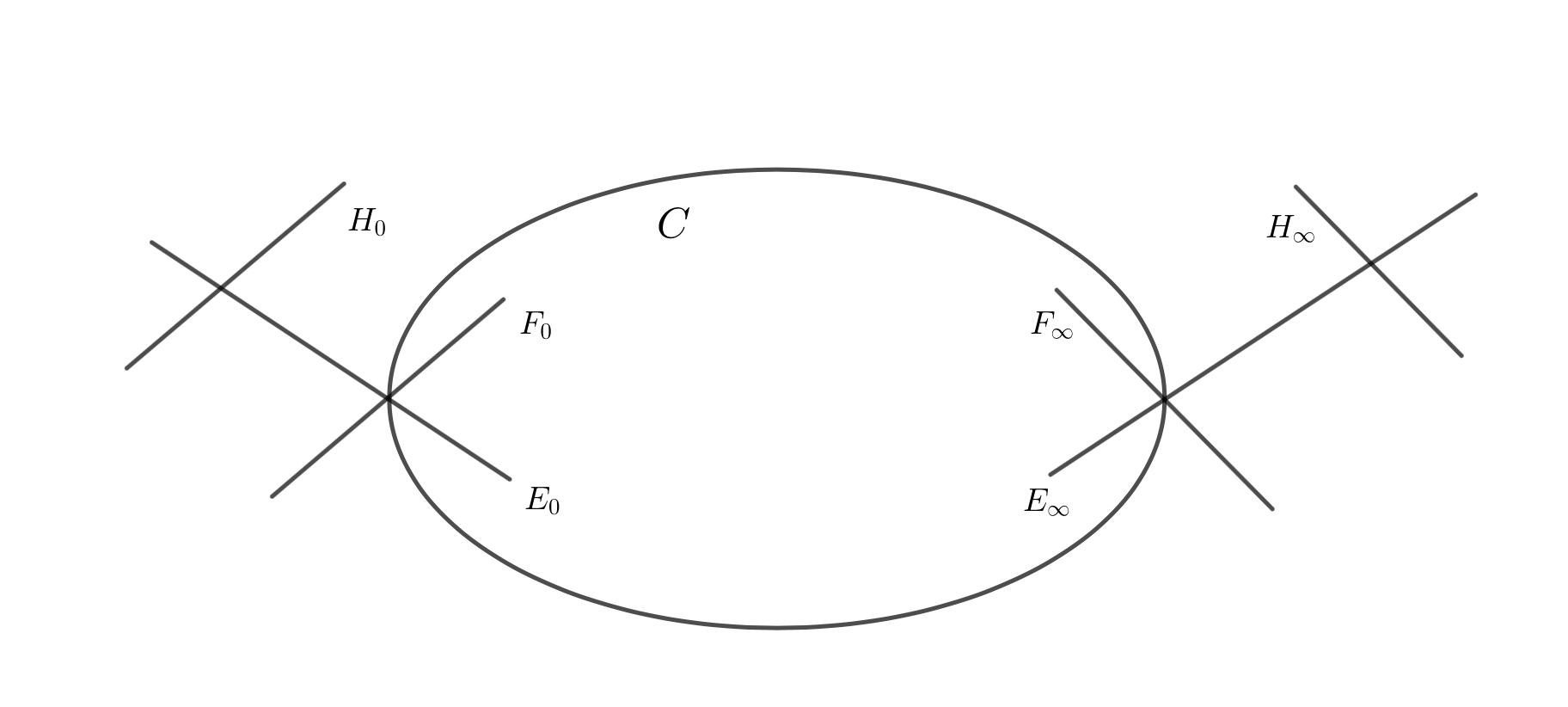}
\caption{\label{fig:2b}}
\end{figure}


\begin{remark}
Due to the existence of $G$-action, Proposition \ref{proposition:KEE-for-2-special} implies that the log Fano pair $\big(S,(1-\beta)C\big)$ is log K-polystable. Moreover, when $\beta=1$, this recovers the well-known existence of KE metrics on $Bl_3\mathbb{P}^2$.
\end{remark}

To prove Proposition \ref{proposition:KEE-for-2-special}, it is enough (see \cite[Theorem 2,~Lemma 6.11]{JMR}) to show the following 

\begin{proposition}
\label{proposition:alpha_G}
One has $\alpha_G\big(S,(1-\beta)C\big)=1$ for $\beta\in(0,1]$.
\end{proposition}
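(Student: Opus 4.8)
The plan is to compute the $G$-equivariant $\alpha$-invariant directly from its definition as a log canonical threshold:
$$\alpha_G\big(S,(1-\beta)C\big)=\inf\left\{\lct\Big(S,(1-\beta)C;\tfrac{1}{m}D\Big)\ :\ m\in\mathbb{Z}_{>0},\ D\in|mL_\beta| \text{ a }G\text{-invariant effective }\mathbb{Q}\text{-divisor}\right\},$$
where $L_\beta=-K_S-(1-\beta)C$. Since we already know from earlier that the conjectural necessary direction and the classification pin us down to this pair, and since $\alpha_G\ge 1$ immediately implies existence of the KEE metric via \cite[Theorem 2, Lemma 6.11]{JMR}, there are two things to establish: the lower bound $\alpha_G\ge 1$ and the matching upper bound $\alpha_G\le 1$. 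The upper bound is the easy half: one exhibits a single explicit $G$-invariant divisor in (a multiple of) $|L_\beta|$ whose log canonical threshold with respect to $(S,(1-\beta)C)$ equals $1$. The natural candidate is built from $C$ itself together with the fibral components. Recalling $L_\beta\sim_\mathbb{Q}\pi^*\overline F+\beta C$ and that $\pi^*\overline F\sim F_0+E_0\sim F_\infty+E_\infty$, the divisor $D:=\tfrac12(F_0+E_0)+\tfrac12(F_\infty+E_\infty)+\beta C$ (or a suitable $G$-symmetrized variant) is $G$-invariant and $\mathbb{Q}$-linearly equivalent to $L_\beta$; one then checks that the log pair $(S,(1-\beta)C+D)$ fails to be log canonical at, say, the point $E_0\cap C$, or is exactly log canonical there, forcing $\alpha_G\le 1$. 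A short local computation of multiplicities at the relevant intersection points gives this.

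For the lower bound, the plan is to show that for every $G$-invariant effective $\mathbb{Q}$-divisor $D\sim_\mathbb{Q}L_\beta$ the pair $\big(S,(1-\beta)C+D\big)$ is log canonical. I would split off the part of $D$ supported on the $G$-invariant curves — namely $C$, $F_0$, $E_0$, $H_0$, $F_\infty$, $E_\infty$, $F_\infty$ and their images — writing $D=D_{\mathrm{fix}}+D'$ where $D'$ has no component among these. Because $G$ acts on each fiber of the two rulings with the $\mathbb{C}^*$ moving points around and $\iota$ swapping the two special fibers, the mobile-type part $D'$ has small multiplicity at every $\mathbb{C}^*$-fixed point: concretely, $\mathbb{C}^*$-invariance of $D'$ forces $\operatorname{mult}_p D'$ to be controlled by an intersection number with a general fiber, and by equivariance the contribution of $D'$ along a general fiber is spread out, so $\operatorname{mult}_p(D')\le (D'\cdot \text{general fiber})$, which is $O(\beta)$ and hence negligible for small $\beta$ — but here we want it for all $\beta\in(0,1]$, so the bookkeeping must be done carefully using $L_\beta\cdot \overline F$-type numbers rather than asymptotics. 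The key inequalities are then: (i) at a point $p$ not on any $G$-invariant curve, the full $\mathbb{C}^*$-orbit of $p$ lies in $D'$ if $p\in\operatorname{Supp}D'$, but a $1$-dimensional orbit has the same class as a fiber component, bounding $\operatorname{mult}_p D'$; (ii) along $C$, one uses that $D'$ meets $C$ in a $\iota$-symmetric way and that adjunction/inversion of adjunction on $C\cong\mathbb{P}^1$ reduces log canonicity near $C$ to a degree count on $C$; (iii) at the special points $\overline p_0\cap$(exceptional locus) and their blowups, one does the local analytic check using the explicit coordinates $sv^2=tu^2$ and the weights $([s:\lambda^2 t],[u:\lambda v])$.

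Assembling these, the argument is: any $G$-invariant $D$ can be written so that its non-fixed part has multiplicity $\le 1$ (after scaling by $(1-\beta)$-adjusted coefficients) at every point where trouble could occur, and its fixed part is a nonnegative combination of $C,F_0,E_0,H_0,F_\infty,E_\infty$ whose total coefficient along any single curve is bounded by intersecting $L_\beta$ with a complementary curve; one checks curve by curve that these coefficient bounds, together with the $(1-\beta)C$ already present, keep all log canonical thresholds $\ge 1$. The normal-crossing configuration of $\{C,F_0,E_0,H_0\}$ near $\overline p_0$ (a chain/triangle of rational curves) is simple enough that the log canonicity condition becomes a handful of linear inequalities in the coefficients, all of which follow from the equivalence $D\sim_\mathbb{Q}L_\beta$.

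The main obstacle I expect is step (ii)–(iii): controlling the multiplicity of the non-fixed part $D'$ of $D$ along the curve $C$ and at the two blown-up special points uniformly in $\beta\in(0,1]$ (not merely asymptotically), since $C$ itself already carries coefficient $1-\beta$ and the pair is genuinely on the boundary — the inequality $\alpha_G=1$ is sharp, so every estimate has to be tight, with no room to spare. In particular one must use the full force of $G$-invariance (both the torus and the involution) to rule out a $G$-invariant divisor concentrating too much mass at $E_0\cap C$ or $E_\infty\cap C$; a naive bound using only the $\mathbb{C}^*$-action would lose the involution-induced cancellation and give only $\alpha_G\ge \tfrac12$ or similar. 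Handling the blowups carefully — passing to $S$ and computing $\operatorname{mult}_{E_j}D$ and the residual multiplicities on $E_j$ — is where the real work lies.
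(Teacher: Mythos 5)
Your upper bound is fine: $D=\tfrac12(F_0+E_0)+\tfrac12(F_\infty+E_\infty)+\beta C$ is $G$-invariant, $\mathbb{Q}$-linearly equivalent to $L_\beta$, and $(1-\beta)C+D$ has multiplicity $2$ at $p_0=E_0\cap F_0$ (which is the same point as $E_0\cap C$), so the pair is exactly log canonical there and any rescaling $\lambda>1$ destroys log canonicity; this is precisely the equality case of the computation in the paper. The genuine gap is in the lower bound $\alpha_G\ge 1$, which is the heart of the proposition: you allow a nonzero ``moving part'' $D'$ with no component among the invariant curves and propose orbit/multiplicity estimates, inversion of adjunction along $C$, and local checks at the blown-up points, and you yourself flag steps (ii)--(iii) as unresolved. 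But after the reduction (already recorded in the remark preceding the proposition) from $G$-invariant linear systems to $G$-invariant divisors, there is no $D'$ at all: $\mathbb{C}^*$ is connected, so it cannot permute the irreducible components of an invariant divisor, hence every component of $D$ is itself a $\mathbb{C}^*$-invariant curve. These are exactly $E_0,E_\infty,F_0,F_\infty,H_0,H_\infty$ and the strict transforms $C_{[\alpha:\beta]}$ of $\{\alpha sv^2=\beta tu^2\}$; in particular your heuristic in (i) is also off, since the closure of a general $\mathbb{C}^*$-orbit is such a $(1,2)$-type curve of class $2H+F-E_0-E_\infty$, not a fiber component, so the claimed bound $\mult_p D'\le (D'\cdot\text{fiber})$ has no basis.

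Once this structural fact is used, the lower bound collapses to linear algebra, which is the paper's route: write $(1-\beta)C+D=e(E_0+E_\infty)+f(F_0+F_\infty)+h(H_0+H_\infty)+\sum_i\gamma_i C_{[\alpha_i:\beta_i]}$ (using $\iota$-symmetry), compare classes with $-K_S=2H+2F-E_0-E_\infty$ to get $h=1-\gamma$, $f=e=1-\gamma/2$ with $\gamma=\sum_i\gamma_i$; hence all coefficients are at most $1$, and since all the curves involved meet pairwise transversely at $p_0$ and $p_\infty$, a single blowup at each of these two points is a log resolution, where the multiplicity $\gamma+e+f=2$ gives log discrepancy exactly $0$. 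This proves log canonicity for every $G$-invariant $D$, uniformly in $\beta\in(0,1]$ --- indeed $\beta$ disappears because $(1-\beta)C+D\sim_{\mathbb{Q}}-K_S$, which is exactly why your worry about estimates being ``tight with no room to spare'' and only asymptotic in $\beta$ does not arise. As written, your sketch does not establish $\alpha_G\ge 1$: the key reduction is missing, the orbit-class estimate is incorrect, and the steps you identify as ``the real work'' are left undone.
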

Here $\alpha_G\big(S,(1-\beta)C\big)$ is defined as
$$
\alpha_G\big(S,(1-\beta)C\big):=\mathrm{sup}\left\{\lambda\in\mathbb{Q}\ \Bigg|\;%
\aligned
&\text{the log pair}\ \left(S,(1-\beta)C+\lambda \mathcal{D}\right)\ \text{is log canonical}\\
&\text{for every G-invariant $\mathbb{Q}$-linear system}\ \mathcal{D}\sim_{\mathbb{Q}} L_\beta
\endaligned\Bigg.\right\}.
$$

\begin{remark}
In fact, to compute our $\alpha_G$, it suffices to consider $G$-invariant divisors. Indeed, as $\mathbb{C}^*$ is abelian, so any $\mathbb{C}^*$-invariant linear system must contain a $\mathbb{C}^*$-divisor $D$. Then it is enough to look at $\frac{1}{2}(D+\iota(D))$, as in \cite[Section 7]{CS18}.
\end{remark}

\begin{proof}[Proof of Proposition \ref{proposition:alpha_G}]

We will show that for any $G$-invariant divisor $D\sim_\mathbb{Q} L_\beta$ the pair $(S,(1-\beta)C+D)$ is log canonical,
but $(S,(1-\beta)C+\lambda D)$ is not for $\lambda > 1$.  The the Picard group of $S$ has basis $[H]:=[\pi^*\overline{H}_0]$, $[F]:=[\pi^*\overline{F}_0]$, $[E_0]$ and $[E_\infty]$.  In this basis we have $[F_0]=[F]-[E_0]$, $[F_\infty]=[F]-[E_\infty]$, $[H_0]=[H]-[E_0]$ and $[H_\infty]=[H]-[E_\infty]$.
An anti-canonical divisor is given by \begin{equation}-K_S=2H+2F-E_0-E_\infty\label{eq:KS}.\end{equation} Note, that the $\mathbb{C}^*$-invariant curves on $S$ are given by $E_0,E_\infty, F_0, F_\infty, H_0, H_\infty$ and the strict transforms $C_{[\alpha:\beta]}$ of the curves $\overline{C}_{[\alpha:\beta]}=[\alpha sv^2= \beta tu^2]\subset \overline{S}$ for $[\alpha:\beta]\in \mathbb{P}^1 \setminus\{0,\infty\}$. These curves are all linearly equivalent to $2H+F-E_0-E_\infty$ and
each of them passes through the intersection points $p_0 = E_0 \cap F_0$ and $p_\infty = E_\infty \cap F_\infty$ intersecting all four curves transversely. The curves $C_{[\alpha:\beta]}$ also intersect each other pairwise transversely in these two points.

Being $G$-invariant the divisor $(1-\beta)C+D \sim -K_S$ has to have the form
\[e(E_0+E_\infty)+f(F_0+F_\infty)+h(H_0+H_\infty)+ \sum \gamma_i C_{[\alpha_i:\beta_i]}.\]
We set $\gamma := \sum_i \gamma_i$. Passing to the classes in the Picard group of $S$ we get
\[[(1-\beta)C+D]=(e-f-h-\gamma)([E_0]+[E_\infty])+(2f+\gamma)\cdot [F]+(2h+2\gamma) \cdot [H].\]
Comparing coefficient with $-K_S$ in (\ref{eq:KS}) gives $h=1-\gamma$, $f=1-\gamma/2$ and \(e=f+h+\gamma-1 = 1- \gamma/2\). 
Therefore all coefficients of $(1-\beta)C+D$ are less or equal to $1$. A log resolution of $\big(S,(1-\beta)C+D\big)$ is given by further blowing up $S$ in
$p_0$ and $p_\infty$. The multiplicity of $(1-\beta)C+D$ in these points is $\gamma+e+f=2$. This implies that
the discrepancy at the corresponding exceptional divisor is $-1$ and rescaling $D$ by a constant $\lambda>1$ will further decrease this discrepancy.
\end{proof}

\subsection{Counterexamples}
\label{subsection:S(Z)}
\hfill\\
\hfill\\
In this part we carry out some explicit computation and give upper bounds for $\delta\big(S,(1-\beta)C\big)$. This will give us some counterexamples to Conjecture \ref{conjecture:CR-conj} (see Remark \ref{remark:r=1} and \ref{remark:r=2}).

Recall that, for any prime divisor $Z$ over the surface $S$, we have the expected vanishing order
$$S(Z):=\frac{1}{(L_\beta)^2}\int_0^{\tau(Z)}\vol(L_\beta-xZ)dx.$$

\begin{proposition}
\label{propsition:Z-on-S}
For any $r\ge1$, one has
\begin{equation*}
S(Z)=
    \begin{cases}
    \frac{1}{2}+\frac{6-r}{8}\beta+O(\beta^2),\ Z=E_i,\ i\in I;\\
    \frac{1}{2}+\frac{6-r}{8}\beta+O(\beta^2),\ Z=F_i,\ i\in I;\\
    \frac{\beta}{2}+\frac{r-4}{24}\beta^2+O(\beta^3),\ Z=C;\\
    \frac{1}{2}+\frac{4-r}{8}\beta+O(\beta^2),\ Z\in|\pi^*\overline{F}|.
    \end{cases}
\end{equation*}
\end{proposition}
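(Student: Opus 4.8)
The plan is to compute every $S(Z)$ by intersection theory on the smooth surface $S$, using that for a pseudoeffective $\bQ$-divisor $D$ on a surface $\vol(D)=P^2$, where $D=P+N$ is its Zariski decomposition. First I would fix the intersection form on $S$ in the basis $\pi^*\overline{F}$, $\pi^*\overline{H}$, $\{E_j\}_{j\in I}$ (so $(\pi^*\overline{F})^2=(\pi^*\overline{H})^2=0$, $\pi^*\overline{F}\cdot\pi^*\overline{H}=1$, $E_i\cdot E_j=-\delta_{ij}$, $E_j\cdot\pi^*\overline{F}=E_j\cdot\pi^*\overline{H}=0$) and, from $L_\beta\sim_\bQ\pi^*\big((1+\beta)\overline{F}+2\beta\overline{H}\big)-\beta\sum_{j\in I}E_j$, compute $(L_\beta)^2=4\beta+(4-r)\beta^2$. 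The heart of the argument is then, for each of the four types of $Z$, to follow the Zariski decomposition of $L_\beta-xZ$ as $x$ runs from $0$ to the pseudoeffective threshold $\tau(Z)$: it is piecewise of the form $P(x)-\big(\text{a combination of negative curves}\big)$, where the only curves that can occur are $E_j$, $F_i$, $H_i$ ($i\in I$), the curve $C$, and (in the nef test) general members of $|\pi^*\overline{F}|$. On each chamber $\vol(L_\beta-xZ)=P(x)^2$ is an explicit quadratic polynomial in $x$ with coefficients polynomial in $\beta$, so $S(Z)=\tfrac1{(L_\beta)^2}\int_0^{\tau(Z)}P(x)^2\,dx$ is a rational function of $\beta$; Taylor expanding it gives the claimed formulas.

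I would dispatch $Z=C$ first, as it is cleanest: $L_\beta-xC\sim_\bQ\pi^*\overline{F}+(\beta-x)C$ is nef on $[0,\beta]$ and equals $\pi^*\overline{F}$ at $x=\beta$, whereas for $x>\beta$ it meets a general (hence moving) member of $|\pi^*\overline{F}|$ negatively and so is not pseudoeffective; thus $\tau(C)=\beta$ and $S(C)=\tfrac1{(L_\beta)^2}\int_0^\beta\big(\pi^*\overline{F}+(\beta-x)C\big)^2\,dx$, which expands to $\tfrac{\beta}{2}+\tfrac{r-4}{24}\beta^2+O(\beta^3)$. For $Z=E_i$, $Z=F_i$ ($i\in I$) and $Z\in|\pi^*\overline{F}|$ the same scheme applies but with a nontrivial ``tail''. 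On the main range $x\in[0,1]$ the Zariski decomposition is clean for small $r$ (e.g.\ $r\le4$; the modifications for larger $r$ are discussed below): $L_\beta-xE_i$ is nef on $[0,\beta]$ and then has negative part $(x-\beta)F_i$; $L_\beta-xF_i$ behaves identically with the roles of $E_i$ and $F_i$ interchanged; and for a general fibre $Z$ the class $L_\beta-xZ$ is nef on all of $[0,1]$, with the handy identity $L_\beta-Z\sim_\bQ\beta C$ at the endpoint. On this range I would integrate the resulting quadratics explicitly. Beyond $x=1$ the positive part has self-intersection $O(\beta^2)$ while $\tau(Z)=1+O(\beta)$, so --- using that $x\mapsto\vol(L_\beta-xZ)$ is non-increasing --- the tail contributes only $O(\beta^3)$ to $\int_0^{\tau(Z)}\vol$; after dividing by $(L_\beta)^2=4\beta+O(\beta^2)$ this is $O(\beta^2)$ and so invisible at the asserted precision. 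This yields $S(E_i)=S(F_i)=\tfrac12+\tfrac{6-r}{8}\beta+O(\beta^2)$ and $S(Z)=\tfrac12+\tfrac{4-r}{8}\beta+O(\beta^2)$ for $Z\in|\pi^*\overline{F}|$.

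The one genuinely delicate point --- which I expect to be the main obstacle --- is the chamber structure right at the end of the main range. For small $r$ the Zariski decompositions are exactly as above, but once $r\ge5$ the curve $C$, and also the $(-1)$-curves obtained as proper transforms of the $(2,1)$-curves through five of the points $\overline{p}_i$, can become non-nef against $P(x)$ slightly before $x$ reaches the endpoint of the main range, creating extra chambers. Each such chamber has width $O(\beta)$ and carries volume $O(\beta^2)$, hence contributes only $O(\beta^3)$ to the integral and does not perturb the first two terms of $S(Z)$; the careful verification that every such higher-$r$ and configuration-dependent phenomenon is absorbed into the error term is precisely what makes the displayed leading coefficients uniform in $r$. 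Everything else is bookkeeping with the intersection form on $S$.
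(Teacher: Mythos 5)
Your proposal is correct and follows essentially the same method the paper uses: it gives no self-contained proof of this proposition (it simply says the formulas follow from explicit computation and cites \cite{CRZ}), but its model computation for Proposition \ref{proposition:Z-over-S}(2) is exactly your technique of tracking the piecewise Zariski decomposition of $L_\beta-xZ$, computing $\vol$ as $P(x)^2$ chamber by chamber, and integrating. Your computations check out (e.g.\ $(L_\beta)^2=4\beta+(4-r)\beta^2$, $\tau(C)=\beta$, negative part $(x-\beta)F_i$ for $Z=E_i$ on the main range, and the $O(\beta^2)$-volume, $O(\beta)$-width walls near $x=1$ for $r\ge 5$ contributing only $O(\beta^3)$), and your uniform-in-$r$ error bookkeeping is a reasonable substitute for the exact case-by-case evaluations carried out in \cite{CRZ}.
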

This follows from explicit computation; see \cite{CRZ} for more details. Note that, in the above proposition, the prime divisor $Z$ is on the surface $S$. We can also consider $Z$ \emph{over} $S$. 

\begin{proposition}
\label{proposition:Z-over-S}
We have the following
\begin{enumerate}
    \item Suppose that $p\in C$ is away from any $F_i$ or $E_i$, where $i\in I$. Let $\widetilde{S}\xrightarrow{\sigma} S$ be the blow-up of $p$ and let $Z$ be the exceptional curve of $\sigma$. Then
    $$S(Z)=\frac{1}{2}+\frac{12-r}{8}\beta+O(\beta^2),\ r\geq0$$
    \item Suppose that $0\in I$, i.e. $\overline{p}_0$ is blown up. Put $p_0=E_0\cap C$. Let $\widetilde{S}\xrightarrow{\sigma} S$ be the blow-up of $p_0$ and let $Z$ be the exceptional curve of $\sigma$. Then
    $$S(Z)=1+\frac{6-r}{4}\beta+O(\beta^2),\ r\geq1.$$
    Moreover, for $r=2$, we have exactly
    $$S(Z)=1+\beta;$$
    \item Suppose that $p=E_i\cap C$ or $p=F_i\cap C$ for some $i\in I$ and $i\neq0$ or $\infty$. Let $\widetilde{S}\xrightarrow{\sigma} S$ be the blow-up of $p$ and let $Z$ be the exceptional curve of $\sigma$. Then
    $$S(Z)=\frac{1}{2}+\frac{14-r}{8}\beta+O(\beta^2),\ r\geq1;$$
    \item Suppose $0\notin I$. Let $p_0=F_0\cap C$. Let $\widetilde{S}\xrightarrow{\sigma} S$ be the blow-up of $p_0$ and let $G$ be the exceptional curve of $\sigma$. Let $\widetilde{C}$ be the proper transform of $C$ on $\widetilde{S}$. Put $q_0=G\cap\widetilde{C}$. Let $\widehat{S}\xrightarrow{\tau} \widetilde{S}$ be the blow-up of $q_0$ and let $Z$ be the exceptional curve of $\tau$.
    Then we have
    \begin{equation*}
    S(Z)=
        \begin{cases}
        1+2\beta,\ r=0,\\
        1+\frac{8-r}{4}\beta+O(\beta^2),\ r\geq1.\\
        \end{cases}
    \end{equation*}
\end{enumerate}
\end{proposition}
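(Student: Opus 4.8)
The plan is to compute each $S(Z)$ directly from the defining formula
\[
S(Z)=\frac{1}{(L_\beta)^2}\int_0^{\tau(Z)}\vol(L_\beta-xZ)\,dx
\]
by running the Zariski decomposition of $L_\beta-xZ$ on the relevant birational model $\widetilde S$ (or $\widehat S$ in case (4)) as $x$ increases from $0$ to the pseudo-effective threshold $\tau(Z)$. First I would fix the intersection-theoretic data: on each model record the classes of $Z$, of $C$ (or $\widetilde C$, $\widehat C$), of the relevant $F_i$, $E_i$, $H_i$ and of $\pi^*\overline F$, together with their pairwise intersection numbers and self-intersections, so that $L_\beta$ (whose class is $\pi^*\overline F+\beta C$ pulled back appropriately, minus $\beta$ times the new exceptional classes in the right amount) is expressed in that basis. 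Then $\sigma^*L_\beta-xZ$ has $Z$-degree that becomes negative immediately, so I expect the Zariski decomposition to have $Z$ (and possibly $\widetilde C$, $F_0$, etc.) in its negative part; I would solve the linear system $P\cdot(\text{negative components})=0$ for the coefficients, obtaining $P(x)$ piecewise-quadratic in $x$, and identify the finitely many walls $0=x_0<x_1<\dots<x_k=\tau(Z)$ where the negative part changes.

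The bookkeeping device I would use to keep everything tractable is to note that $L_\beta=\pi^*\overline F+\beta C$ and that the "$O(\beta)$'' asymptotics only require the Zariski chamber structure at $\beta=0$ plus its first-order deformation: at $\beta=0$ we have $L_0=\pi^*\overline F$, a semiample class of volume $0$, and the whole computation becomes a perturbation around the nef but non-big divisor $\pi^*\overline F$. Concretely I would write $\vol(L_\beta-xZ)=(P_\beta(x))^2$ on each chamber, expand $P_\beta(x)$ in $\beta$, integrate term by term over each chamber, and sum; dividing by $(L_\beta)^2=2\beta+O(\beta^2)$ (which one reads off from $L_\beta\sim_\mathbb Q\pi^*\big((1+\beta)\overline F+2\beta\overline H\big)-\beta\sum_{j\in I}E_j$) produces the stated expansions. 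For part (2) with $r=2$ and for part (4) with $r=0$, where the proposition asserts an \emph{exact} value ($S(Z)=1+\beta$, resp. $1+2\beta$), I would instead carry out the Zariski decomposition with $\beta$ kept as a formal parameter throughout, which is feasible because with so few blown-up points the chamber walls are explicit rational functions of $\beta$ and the resulting integral is an exact rational function that simplifies.

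The main obstacle I anticipate is correctly determining the support of the negative part of the Zariski decomposition on each chamber — i.e. which curves among $Z$, the proper transform of $C$, the adjacent $(-1)$- and $(-2)$-curves, and $\pi^*\overline F$ drop out of the mobile part as $x$ grows — and pinning down the wall-crossing values $x_i$. This is delicate because the geometry genuinely differs across the four cases: in (1) the point $p$ is generic on $C$ so $Z$ meets only $\widetilde C$; in (2) and (3) the center lies on $E_0\cap C$ or $F_i\cap C$, so after blow-up $Z$ meets two preexisting $(-1)$-curves and the chamber structure has an extra wall; in (4) there is an iterated blow-up, so $Z$ meets $\widetilde C$ and the first exceptional curve $G$, and one must track how $G$ and $F_0$ enter the negative part. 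Once the chambers and walls are correct, the integration is routine. I would organize the write-up case by case, in each case displaying a short table of the Zariski chambers $[x_{i-1},x_i]$ with the corresponding positive part $P_\beta(x)$, and then quoting the elementary integral; for the two exact statements I would additionally remark that the $\beta$-dependence of the walls is explicit so that no truncation is needed. Throughout, I would cross-check the leading term: since $\vol(L_\beta-xZ)\le (L_\beta)^2=2\beta+O(\beta^2)$ and $\tau(Z)=O(1)$, each $S(Z)$ must be $O(1)$ with the constant term equal to the value of the analogous integral for $\pi^*\overline F-xZ$ on $\overline S$ or its blow-up, which one can verify independently.
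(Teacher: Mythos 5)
Your plan — run the Zariski decomposition of $\sigma^*L_\beta-xZ$ chamber by chamber on the blown-up model, compute $\vol$ piecewise, integrate, divide by $(L_\beta)^2$, and keep $\beta$ as an exact parameter in the two cases where an exact value is claimed — is exactly how the paper proceeds (its written-out case, (2) with $r=2$, determines the nef thresholds, peels off $\frac{x-\beta}{2}(\widetilde F_0+\widetilde E_0)$, and finishes with the full Zariski decomposition before integrating). One correction you must make before the expansions come out right: the normalization is $(L_\beta)^2=4\beta+(4-r)\beta^2$ (e.g.\ $4\beta+2\beta^2$ when $r=2$), not $2\beta+O(\beta^2)$ as you wrote; with your value every leading constant and linear coefficient in the stated formulas would be off.
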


Again, this follows from elementary computation. For the reader's convenience, we include the proof of case (2) with $r=2$. The computation for other cases is similar.

\begin{proof}[Proof of Proposition \ref{proposition:Z-over-S}(2) with $r=2$]
In this case, $S$ is obtained by blowing up $\overline{p}_0$ and another point $\overline{p}_i$ (possibly $\overline{p}_\infty$) on $\overline{C}$. Then we have
$$L_\beta\sim_\mathbb{Q}\pi^*\big((1+\beta)\overline{F_0}+\beta\overline{H}_0+\beta\overline{H}_i\big)-\beta E_0-\beta E_i=(1+\beta)(F_0+E_0)+\beta(H_0+H_i).$$
Now let $\widetilde{S}\xrightarrow{\sigma}S$ be the blow-up of $p_0=E_0\cap C$ with $Z$ being the exceptional curve of $\sigma$. Let $\widetilde{F}_0$, $\widetilde{E}_0$, $\widetilde{H}_0$ and $\widetilde{H}_i$ be the proper transforms of $F_0$, $E_0$, $H_0$ and $H_i$ on $\widetilde{S}$ respectively. Then we have
$$\sigma^*L_\beta-xZ\sim_\mathbb{Q}(1+\beta)(\widetilde{F}_0+\widetilde{E}_0)+\beta(\widetilde{H}_0+\widetilde{H}_i)+(2+2\beta-x)Z.$$
Note that
\begin{equation*}
    \begin{cases}
    (\sigma^*L_\beta-xZ)\cdot\widetilde{F}_0=(\sigma^*L_\beta-xZ)\cdot\widetilde{E}_0=\beta-x,\\
    (\sigma^*L_\beta-xZ)\cdot\widetilde{H}_0=(\sigma^*L_\beta-xZ)\cdot\widetilde{H}_i=1,\\
    (\sigma^*L_\beta-xZ)\cdot Z=x.\\
    \end{cases}
\end{equation*}
So $\sigma^*L_\beta-xZ$ is nef for $x\in[0,\beta]$. Thus we have
$$\vol\big(\sigma^*L_\beta-xZ\big)=\big(\sigma^*L_\beta-xZ\big)^2=4\beta+2\beta^2-x^2,\ x\in[0,\beta].$$
And for $x\geq\beta$, \cite[Corollary 2.8]{CZ} implies
$$\vol\big(\sigma^*L_\beta-xZ\big)=\vol\big(\sigma^*L_\beta-xZ-\frac{x-\beta}{2}(\widetilde{F}_0+\widetilde{E}_0)\big).$$
Note that
\begin{equation*}
    \begin{cases}
    \big(\sigma^*L_\beta-xZ-\frac{x-\beta}{2}(\widetilde{F}_0+\widetilde{E}_0)\big)\cdot\widetilde{F}_0=\big(\sigma^*L_\beta-xZ-\frac{x-\beta}{2}(\widetilde{F}_0+\widetilde{E}_0)\big)\cdot\widetilde{E}_0=0,\\
    \big(\sigma^*L_\beta-xZ-\frac{x-\beta}{2}(\widetilde{F}_0+\widetilde{E}_0)\big)\cdot\widetilde{H}_0=\big(\sigma^*L_\beta-xZ-\frac{x-\beta}{2}(\widetilde{F}_0+\widetilde{E}_0)\big)\cdot\widetilde{H}_i=1-\frac{x-\beta}{2},\\
    \big(\sigma^*L_\beta-xZ-\frac{x-\beta}{2}(\widetilde{F}_0+\widetilde{E}_0)\big)\cdot Z=\beta.\\
    \end{cases}
\end{equation*}
So $\sigma^*L_\beta-xZ-\frac{x-\beta}{2}(\widetilde{F}_0+\widetilde{E}_0)$ is nef for $x\in[\beta,2+\beta]$. Thus for $x\in[\beta,2+\beta]$ we have
$$\vol\big(\sigma^*L_\beta-xZ\big)=\big(\sigma^*L_\beta-xZ-\frac{x-\beta}{2}(\widetilde{F}_0+\widetilde{E}_0)\big)^2=4\beta+3\beta^2-2\beta x.$$
Now for $x\geq 2+\beta,$ we use Zariski decomposition \cite[Corollary 2.7]{CZ}. Solve the following equations:
\begin{equation*}
    \begin{cases}
    \big(a\widetilde{F}_0+b\widetilde{E}_0+c\widetilde{H}_0+d\widetilde{H}_i+(2+2\beta-x)Z\big)\cdot \widetilde{F}_0=0\\
    \big(a\widetilde{F}_0+b\widetilde{E}_0+c\widetilde{H}_0+d\widetilde{H}_i+(2+2\beta-x)Z\big)\cdot \widetilde{E}_0=0\\
    \big(a\widetilde{F}_0+b\widetilde{E}_0+c\widetilde{H}_0+d\widetilde{H}_i+(2+2\beta-x)Z\big)\cdot \widetilde{H}_0=0\\
    \big(a\widetilde{F}_0+b\widetilde{E}_0+c\widetilde{H}_0+d\widetilde{H}_i+(2+2\beta-x)Z\big)\cdot \widetilde{H}_i=0\\
    \end{cases}
\end{equation*}
We get
$$a=b=c=d=2+2\beta-x.$$
This implies, for $x\in[2+\beta,2+2\beta]$, we have
    \begin{align*}
        \vol\big(\sigma^*L_\beta-xZ\big)&=\vol\big((2+2\beta-x)(\widetilde{F}_0+\widetilde{E}_0+\widetilde{H}_0+\widetilde{H}_i+Z)\big)\\
         &=(2+2\beta-x)^2\big(\widetilde{F}_0+\widetilde{E}_0+\widetilde{H}_0+\widetilde{H}_i+Z\big)^2\\
         &=(2+2\beta-x)^2.\\
    \end{align*}
So we can compute
    \begin{align*}
        \int_0^{\tau(Z)}\vol(\sigma^*L_\beta-xZ\big)dx&=\int_0^{2+2\beta}\vol(\sigma^*L_\beta-xZ\big)dx\\
        &=\int_0^\beta(4\beta+2\beta^2-x^2)dx+\int_\beta^{2+\beta}(4\beta+3\beta^2-2\beta x)dx + \\
        &\quad +\int_{2+\beta}^{2+2\beta}(2+2\beta-x)^2dx\\
        &=4\beta+6\beta^2+2\beta^3.\\
    \end{align*}
Thus we get
$$S(Z)=\frac{1}{L^2_\beta}\int_0^{\tau(Z)}\vol(\sigma^*L_\beta-xZ\big)dx=\frac{4\beta+6\beta^2+2\beta^3}{4\beta+2\beta^2}=1+\beta.$$

\end{proof}

Note that Proposition \ref{proposition:Z-over-S} has the following consequence.
\begin{corollary}
\label{corollary:delta-upper-bound}
We have the following upper bound for $\delta$-invariant.
\begin{enumerate}
    \item If $0\in I$, then
    $$\delta\big(S,(1-\beta)C\big)\leq1+\frac{r-2}{4}\beta+O(\beta^2),\ r\geq1.$$
    Moreover, when $r=2$, we have exactly
    $$\delta\big(S,(1-\beta)C\big)=1,$$
    and the infimum of \eqref{equation:delta=A/S} is obtained by the $Z$ in Proposition \ref{propsition:Z-on-S}(2).
    \item If $0\notin I$, then
    $$\delta\big(S,(1-\beta)C\big)\leq1+r\beta+O(\beta^2),\ r\geq0.$$
    Moreover, when $r=0$, we have exactly
    $$\delta\big(S,(1-\beta)C\big)=1,$$
    and the infimum of \eqref{equation:delta=A/S} is obtained by the $Z$ in Proposition \ref{propsition:Z-on-S}(4).
\end{enumerate}
\end{corollary}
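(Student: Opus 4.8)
The plan is to read the two upper bounds straight off the valuative criterion \eqref{equation:delta=A/S}: since $\delta\big(S,(1-\beta)C\big)\le A(Z)/S(Z)$ for every prime divisor $Z$ over $S$, it suffices to test $\delta$ against one well-chosen $Z$ in each case, and the natural choices are exactly the divisors built in Proposition \ref{proposition:Z-over-S}. For part (1) I would take $Z$ to be the exceptional curve over $p_0=E_0\cap C$ from Proposition \ref{proposition:Z-over-S}(2); for part (2), the curve $Z$ produced by the two successive blow-ups over $p_0=F_0\cap C$ and then $q_0=G\cap\widetilde{C}$ from Proposition \ref{proposition:Z-over-S}(4). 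The quantities $S(Z)$ are already recorded in that proposition, so the only work left is to compute the log discrepancies $A(Z)$ and to expand the quotient in $\beta$.

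Computing $A(Z)$ is routine discrepancy bookkeeping. In part (1), $Z$ is the blow-up of a smooth point of $S$ lying on $C$ with multiplicity one and on no component of the boundary other than $C$ (in particular $E_0$ contributes nothing), so $A(Z)=2-(1-\beta)=1+\beta$, and with $S(Z)=1+\tfrac{6-r}{4}\beta+O(\beta^2)$ one gets
\[
\frac{A(Z)}{S(Z)}=\frac{1+\beta}{1+\tfrac{6-r}{4}\beta+O(\beta^2)}=1+\frac{r-2}{4}\beta+O(\beta^2).
\]
In part (2), the first blow-up produces $G$ with $A(G)=1+\beta$, i.e.\ $\sigma^*\big(K_S+(1-\beta)C\big)=K_{\widetilde{S}}+(1-\beta)\widetilde{C}-\beta G$; since $q_0$ lies on both $\widetilde{C}$ and $G$, the second blow-up yields $A(Z)=2-\big((1-\beta)-\beta\big)=1+2\beta$, and with $S(Z)=1+\tfrac{8-r}{4}\beta+O(\beta^2)$ we obtain $A(Z)/S(Z)=1+\tfrac{r}{4}\beta+O(\beta^2)\le 1+r\beta+O(\beta^2)$ for $\beta$ small. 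This proves both upper bounds. When $r=2$ in part (1) (resp.\ $r=0$ in part (2)), Proposition \ref{proposition:Z-over-S} gives the exact value $S(Z)=1+\beta$ (resp.\ $1+2\beta$), which equals $A(Z)$, so $A(Z)/S(Z)=1$ and therefore $\delta\big(S,(1-\beta)C\big)\le 1$.

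What remains is the reverse inequality $\delta\big(S,(1-\beta)C\big)\ge 1$ in these two boundary cases; by \cite{BJ} this is exactly log K-semistability of $\big(S,(1-\beta)C\big)$, and it is the only ingredient that is not a local computation. For $r=2$ with $I=\{0,\infty\}$ it is immediate from Proposition \ref{proposition:KEE-for-2-special}, since the KEE metric there forces log K-polystability. For $r=0$, the pair $\big(\mathbb{P}^1\times\mathbb{P}^1,(1-\beta)\overline{C}\big)$ is known to carry a small-angle KEE metric, hence is log K-polystable. Finally, the remaining configurations of part (1) --- $I=\{0,i\}$ with $i\ne\infty$ --- are all mutually isomorphic, because the $\mathbb{C}^*\subset\operatorname{Aut}(\overline{S},\overline{C})$ fixing $\overline{p}_0$ acts transitively on $\overline{C}\setminus\{\overline{p}_0,\overline{p}_\infty\}$; moreover they occur as the general fibres of a flat family of klt log Fano pairs over $\overline{C}\setminus\{\overline{p}_0\}$ whose fibre over $\overline{p}_\infty$ is the log K-polystable pair $\big(S_{\{0,\infty\}},(1-\beta)C\big)$, so openness of K-semistability transports log K-semistability to the general fibre. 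In every case this forces $\delta\big(S,(1-\beta)C\big)=1$, and since the value $1$ is attained by the chosen $Z$, that $Z$ realizes the infimum in \eqref{equation:delta=A/S}.

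The hard part is precisely this reverse inequality in the $r=2$ and $r=0$ cases: the intersection-theoretic input of Proposition \ref{proposition:Z-over-S}, the discrepancy computations, and the Taylor expansions all merely bound $\delta$ from above, so to match them one needs a genuinely global fact --- existence of KEE metrics (Proposition \ref{proposition:KEE-for-2-special} and the $\mathbb{P}^1\times\mathbb{P}^1$ case) together with openness of K-semistability in families. Everything else is bookkeeping.
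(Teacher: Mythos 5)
Your upper-bound computations follow the paper's proof essentially verbatim: the same test divisors from Proposition \ref{proposition:Z-over-S}(2) and (4), the same log discrepancies $A(Z)=1+\beta$ and $A(Z)=1+2\beta$, the same expansions (your sharper value $1+\tfrac{r}{4}\beta+O(\beta^2)$ in case (2) of course implies the stated bound $1+r\beta+O(\beta^2)$), and the equality case $I=\{0,\infty\}$ via Proposition \ref{proposition:KEE-for-2-special} is identical; likewise the observation that attaining the value $1$ identifies the minimizing $Z$.

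The one step that needs repair is the transport of K-semistability to the pairs with $I=\{0,i\}$, $i\neq\infty$: you justify it by ``openness of K-semistability'', but that is not an available principle --- K-semistability ($\delta\ge 1$) is not an open condition in families; lower semicontinuity of $\delta$ only makes the loci $\{\delta>c\}$ open, and there are families with K-polystable special fibre and K-unstable nearby fibres. What rescues your argument is precisely the feature you already noted: the family you describe is isotrivial over the punctured base, since the $\mathbb{C}^*\subset\operatorname{Aut}(\overline{S},\overline{C})$ fixing $\overline{p}_0$ moves $\overline{p}_i$ within $\overline{C}\setminus\{\overline{p}_0,\overline{p}_\infty\}$ and degenerates it to $\overline{p}_\infty$. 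Hence lower semicontinuity of $\delta$ \cite{BL}, applied at the special fibre $(S_{\{0,\infty\}},(1-\beta)C)$ which has $\delta\ge1$ by Proposition \ref{proposition:KEE-for-2-special}, gives $\delta\big(S,(1-\beta)C\big)=\liminf_{t\to 0}\delta\ge 1$ for the (constant) general fibre; equivalently, a special degeneration to a K-semistable pair forces K-semistability of the degenerating pair. This is exactly the paper's argument, so the fix is a one-line substitution of the correct tool. Similarly, for $r=0$ the paper pins down your ``known'' small-angle statement by citing \cite[Proposition 7.4]{CR15} for K-polystability of $\big(\mathbb{P}^1\times\mathbb{P}^1,(1-\beta)\overline{C}\big)$; with these two references inserted your proof coincides with the paper's.
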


\begin{proof}
(1) Let $Z$ be the divisor in Proposition \ref{proposition:Z-over-S}(2). Then we have
$$A(Z)=2-(1-\beta)=1+\beta.$$
Using \eqref{equation:delta=A/S}, for $r\geq1$ we get
$$\delta\big(S,(1-\beta)C\big)\leq\frac{A(Z)}{S(Z)}=\frac{1+\beta}{1+\frac{6-r}{4}\beta+O(\beta^2)}=1+\frac{r-2}{4}\beta+O(\beta^2).$$
When $r=2$, we have
$$\delta\big(S,(1-\beta)C\big)\leq\frac{A(Z)}{S(Z)}=\frac{1+\beta}{1+\beta}=1.$$

To see this is actually an equality, we need to use some deeper results. Suppose that $\infty\in I$, so $(S,C)$ is obtained by blowing up $\overline{p}_0$ and $\overline{p}_\infty$. Then Proposition \ref{proposition:KEE-for-2-special} implies that $(S,(1-\beta)C)$ is K-polystable, so we have the other direction:
$$\delta\big(S,(1-\beta)C\big)\geq1.$$

If $\infty\notin I$, then the $\mathbb{C}^*$-action on $(\overline{S},\overline{C})$ (see Subsection \ref{subsection:2-special} for details about this action) induces a K-polystable degeneration of the log Fano pair $(S,(1-\beta)C)$. To be more precise, we are blowing up $\overline{p}_0$ and another point $\overline{p}_i$ on the $(1,2)$ curve $\overline{C}$. Then $\mathbb{C}^*$-action on $(\overline{S},\overline{C})$ fixes $\overline{p}_0$ but moves $\overline{p}_i$ towards $\overline{p}_\infty$. So this action induces a degeneration from $(S,C)$ towards the above K-polystable pair obtained by blowing up $\overline{p}_0$ and $\overline{p}_\infty$. By the lower semi-continuity of $\delta$-invariant (cf. \cite{BL}), we again obtain
$$\delta\big(S,(1-\beta)C\big)\geq1.$$

(2) Let $Z$ be the divisor in Proposition \ref{proposition:Z-over-S}(4). Then we have
$$A(Z)=3-2(1-\beta)=1+2\beta.$$
Using \eqref{equation:delta=A/S}, for $r\geq0$ we get
$$\delta\big(S,(1-\beta)C\big)\leq\frac{A(Z)}{S(Z)}=\frac{1+2\beta}{1+\frac{8-r}{4}\beta+O(\beta^2)}=1+r\beta+O(\beta^2).$$
When $r=0$, we have
$$\delta\big(S,(1-\beta)C\big)\leq\frac{A(Z)}{S(Z)}=\frac{1+2\beta}{1+2\beta}=1.$$
To see this is actually an equality, we use \cite[Proposition 7.4]{CR15}, which implies that $(S,(1-\beta)C)$ is K-polystable, so we have the other direction:
$$\delta\big(S,(1-\beta)C\big)\geq1.$$
\end{proof}

\begin{remark}
\label{remark:r=1}
Suppose that $I=\{0\}$, then Corollary \ref{corollary:delta-upper-bound}(1) implies that
$$\delta\big(S,(1-\beta)C\big)<1$$
for sufficiently small $\beta$. This means that $(S,(1-\beta)C)$ does not admit a KEE metric with sufficiently small cone angle $\beta$. So Conjecture \ref{conjecture:CR-conj} fails in this case. 
Note that there is another way to obtain $\delta\big(S,(1-\beta)C\big)<1$, which relies on the toric calculation in \cite[Section 7]{BJ}.
Indeed, if $I=\{0\}$, then $S\cong Bl_2\mathbb{P}^2$ is a toric surface. One can determine the polytope $P_{\beta}$ of $L_{\beta}$ and its barycenter $bc_\beta$. Let $Z$ be the divisor in Proposition \ref{proposition:Z-over-S}(2). Then by \cite[Corollary 7.7]{BJ}, $S(Z)$ can be explicitly computed as $Z$ gives rise to a toric valuation $v_Z$. More specifically, following the notation therein, we have 
$$S(Z)=\langle bc_\beta,v_Z \rangle-\psi(v_Z)=\frac{4(1+\beta)^2}{4+3\beta},$$
where $bc_\beta=\left(\frac{-(4\beta^2+9\beta+6)}{3(4+3\beta)},\frac{-(7\beta^2+12\beta)}{3(4+3\beta)}\right)$, $v_Z=(2,1)$ and $\psi(v_Z)=-(2+3\beta)$. So that
$$\delta\big(S,(1-\beta)C\big)\leq\frac{A(Z)}{S(Z)}=\frac{4+3\beta}{4+4\beta}<1.$$

\begin{figure}[H]
\centering
\includegraphics[width=0.8\textwidth]{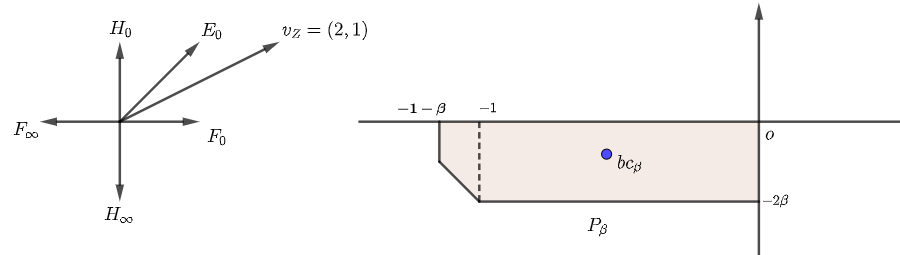}
\caption{\label{fig:toric}The fan of $S$ and the polytope $P_\beta$ of $L_\beta$}
\end{figure}

\end{remark}


\begin{remark}
\label{remark:r=2}
Let us take a closer look at the case when $I=\{0,i\}$ with $i\neq\infty$. Then $\operatorname{Aut}(S,C)$ is discrete. We claim that $(S,(1-\beta)C)$ does not admit a KEE metric with sufficiently small cone angle. If this is not the case, then the existence of a KEE metric implies the properness of K-energy, and hence the pair $(S,(1-\beta)C)$ is uniformly K-stable. So we should have $\delta\big(S,(1-\beta)C\big)>1$, contradicting Corollary \ref{corollary:delta-upper-bound}(1). So Conjecture \ref{conjecture:CR-conj} fails in this case as well. There is another way to see this.
Indeed, as we have seen in the proof of Corollary \ref{corollary:delta-upper-bound}(1), $(S,(1-\beta)C)$ admits a K-polystable degeneration, which implies that $(S,(1-\beta)C)$ cannot be K-stable. So $(S,(1-\beta)C)$ is strictly K-semistable and it cannot admit a KEE metric. In other words, we get a family of strictly K-semistable log Fano surfaces degenerating to a K-polystable log Fano surface. This can be thought of as a 2-dimensional log version of Tian's Mukai-Umemura example (see \cite{T97}).
\end{remark}

In the following table we summarize what is known about the K-stability of the asymptotically log del Pezzo surfaces $(S,C)=(S_I,C)$.
\begin{center}
\begin{table}[h]
\begin{tabular}{ll}
\toprule
 I & K-stability\\
 \toprule
 $\{i_1\} \subset \{0,\infty\}$ & K-unstable\\
 $\{i_1\} \not\subset \{0,\infty\}$ & ? \\
 \midrule
 $\{i_1,i_2\} = \{0,\infty\}$ & strictly K-polystable\\
 $\emptyset \neq \{i_1,i_2\} \cap \{0,\infty\} \neq \{0,\infty\}$ & strictly K-semistable\\
 $\{i_1,i_2\} \cap \{0,\infty\} = \emptyset$ & ?\\
 \midrule
 $3 \leq \#I \leq 6$ & ? \\
 \midrule
 $\#I \geq 7$ & K-stable\\
 \bottomrule\\
\end{tabular}
\caption{K-stability for $(S_I,C)$}
\label{tab:kstab}
\end{table}
\end{center}
\begin{remark}
In the cases where we know the answer according to Table~\ref{tab:kstab}, it turns out that K-(semi/poly-)stability coincides with the GIT-(semi/poly-)stability of the point configuration on $\overline{C} \cong \mathbb{P}^1$ consisting of the blowup centers and the two special points $\overline{p}_0$ and $\overline{p}_\infty$. More precisely, we consider $(\overline{p}_0,\overline{p}_\infty,(\overline{p}_{i})_{i \in I}) \in (\mathbb{P}^1)^{\#I+2}$ and ask for the stability in the GIT sense of this point with respect to the diagonal $\operatorname{SL(2)}$-action on $(\mathbb{P}^1)^{\#I+2}$ and the unique $\operatorname{SL(2)}$-linearization of $\mathcal{O}(1,\ldots,1)$. In the light of this observation it is natural to expect that the remaining cases should all be K-stable, as the corresponding point configurations are indeed GIT-stable.
\end{remark}

\section{Higher dimensional counterexamples and further discussion}
\label{sec:high dim}
In this section we investigate the Cheltsov--Rubinstein program in higher dimensions.
\subsection{Product spaces}
\hfill\\
\hfill\\
There are also simple counterexamples to Conjecture \ref{conjecture:CR-conj} by taking products of log Fano pairs. Let $X_1$ and $X_2$ be two smooth Fano varieties. Suppose that $F\in|-K_X|$ is a smooth divisor. Put
$$X:=X_1\times X_2,\ D:=F\times X_2.$$
Then in particular, $-K_X=p_1^*(-K_{X_1})+p_1^*(-K_{X_2})$ and $D\in|-p_1^*K_{X_1}|$, where $p_1$ and $p_2$ are the natural projections from $X$ to $X_1$ and $X_2$ respectively. It is clear that
$(X,D)$ is an asymptotically log Fano pair, as for any $\beta\in(0,1]$,
$$-K_X-(1-\beta)D\sim_\mathbb{Q}\beta\cdot p_1^*(-K_{X_1})+p_2^*(-K_{X_2})$$
is ample. Moreover, $-(K_X+D)=p_2^*(-K_{X_2})$ is a nef divisor with $(K_X+D)^n=0$.

On the other hand, from the definition of $\delta$-invariant, we clearly have
\begin{equation}
\label{equ:prod-delta}
    \delta\big(X,(1-\beta)D\big)\leq\min\{\delta\big(X_1,(1-\beta)F\big),\delta(X_2)\}.
\end{equation}
So in particular, if $X_2$ is a K-unstable Fano manifold with $\delta(X_2)<1$, then $\delta\big(X,(1-\beta)D\big)<1$ as well. So in this case the pair $\big(X,(1-\beta)D\big)$ cannot admit any KEE metric.

\begin{example}
Take $X_1=\mathbb{P}^2$ and $X_2=\mathrm{Bl}_p\mathbb{P}^2$. Let $F$ be a smooth cubic curve on $X_1$. Then the pair $(X,D)$ we constructed above is asymptotically log Fano with $(K_X+D)^4=0$. And the log pair $\big(X,(1-\beta)D\big)$ does not admit any KEE metric for $\beta\in(0,1]$. So Conjecture \ref{conjecture:CR-conj} fails in this case. 
\end{example}

\begin{remark}
\eqref{equ:prod-delta} is actually an equality by the recent work \cite{Z19}.
\end{remark}

\subsection{K-stability of the base}
\hfill\\
\hfill\\
By Shokurov's base-point-free theorem, it is easy to see that if $(X,D)$ is asymptotically log Fano, then the divisor $-(K_X+D)$ is semi-ample and we let $\phi:X\to Y$ be its corresponding ample model (i.e. $\phi$ has connected fibers and $-(K_X+D)=\phi^*L$ for some ample divisor $L$ on $Y$). Since $-(K_X+D)$ is not big by assumption, we have $\dim X>\dim Y$ and in particular, $\phi$ is not birational. As $K_X+D\sim_{\phi.\bQ} 0$ and $(X,D)$ is lc, we can write $K_X+D\sim_\bQ \phi^*(K_Y+B+M)$ for some effective divisor $B$ (the boundary part) and some pseudo-effective divisor $M$ (the moduli part) by the canonical bundle formula \cite[Theorem 8.5.1]{Kol07}. The example of product varieties above suggests that in order for Conjecture \ref{conjecture:CR-conj} to be true, we may need to impose some conditions on the K-stability of the generalized pair $(Y,B+M)$. Here we give a definition of uniform K-stability and K-semistability of a generalized klt log Fano pair similar to the valuative criterion of Fujita \cite{Fuj19} and Li \cite{Li17}.  

\begin{definition}
Let $(Y, B+M)$ be a projective generalized klt pair such that $-(K_Y+B+M)$ is ample. 
\begin{enumerate}
    \item For any prime divisor $E$ over $Y$, We define 
    \[
    S_{Y,B+M}(E):=\frac{1}{\vol(-K_Y-B-M)}\int_{0}^\infty\vol(-K_Y-B-M-tE)dt.
    \]
    \item We say that $(Y,B+M)$ is \emph{K-semistable} if for any prime divisor $E$ over $Y$, we have 
    $A_{Y,B+M}(E)\geq S_{Y,B+M}(E)$.
    \item We say that $(Y,B+M)$ is \emph{uniformly K-stable} if there exists $\epsilon>0$ such that for any prime divisor $E$ over $Y$, we have
    $A_{Y,B+M}(E)\geq (1+\epsilon)S_{Y,B+M}(E)$.
\end{enumerate}
\end{definition}

In the following proposition we show that K-semistability of the base $(Y,B+M)$ is necessary for Conjecture \ref{conjecture:CR-conj} to hold for $(X,D)$. 

\begin{proposition}
Notation as above. Assume that $(X,(1-\beta)D)$ admits KEE metric for all sufficiently small cone angle $\beta>0$. Then $(Y,B+M)$ is K-semistable.
\end{proposition}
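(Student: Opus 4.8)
The plan is to contrapose: assuming $(Y,B+M)$ is \emph{not} K-semistable, I will produce a divisorial valuation over $X$ whose $\delta$-type invariant (for the pair $(X,(1-\beta)D)$) is $\le 1$ for all small $\beta>0$, which by \cite{BJ} forces $\delta(X,(1-\beta)D)\le 1$, contradicting the existence of a KEE metric (which gives uniform K-stability, hence $\delta>1$, by the same arguments as in Remark \ref{remark:r=2} together with \cite{JMR}). So fix a prime divisor $E$ over $Y$ violating K-semistability, i.e. $A_{Y,B+M}(E)<S_{Y,B+M}(E)$. The key idea is that such an $E$ can be pulled back along $\phi\colon X\to Y$ to a divisor $E_X$ over $X$ (after a suitable birational modification, take the ``horizontal lift'' of $E$, so that $E_X$ dominates $E$ and $\phi_*$ of the corresponding valuation is a positive multiple of $\mathrm{ord}_E$).

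The main computation is then to compare $A$ and $S$ upstairs and downstairs. For the log discrepancy, the canonical bundle formula $K_X+D\sim_\bQ\phi^*(K_Y+B+M)$ (with $(X,D)$ lc and $(Y,B+M)$ the generalized klt base) gives $A_{X,D}(E_X)=A_{Y,B+M}(E)$ when $E_X$ is the pullback of $E$; and since $D$ is the full boundary while we are perturbing to $(1-\beta)D$, we have $A_{X,(1-\beta)D}(E_X)=A_{X,D}(E_X)+\beta\,\mathrm{ord}_{E_X}(D)=A_{Y,B+M}(E)+O(\beta)$. For $S$, the point is that $-K_X-(1-\beta)D\sim_\bQ \beta(-K_X-D)+(-K_X-D)\cdots$ — more precisely $-K_X-(1-\beta)D=\phi^*(-(K_Y+B+M))+\beta(-K_X-D)\sim_\bQ \phi^*\big((1+\beta)L\big)$ wait, rather $-K_X-(1-\beta)D\sim_\bQ\phi^*(-(K_Y+B+M))+\beta D$; as $\beta\to 0$ the positivity comes entirely from the pullback from $Y$. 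I would compute $\vol(-K_X-(1-\beta)D-tE_X)$ by pushing forward to $Y$: since $-K_X-(1-\beta)D$ is, up to the $\beta D$ perturbation, pulled back from the lower-dimensional $Y$, the restricted volume along fibers degenerates, and one gets
\[
S_{X,(1-\beta)D}(E_X)=S_{Y,B+M}(E)+O(\beta),
\]
with the leading term being exactly the base quantity (this is the analogue of how, for a product $X_1\times X_2$, the $S$-invariant of a divisor pulled back from $X_2$ equals $S_{X_2}$ of that divisor — cf. the inequality \eqref{equ:prod-delta}). Then
\[
\frac{A_{X,(1-\beta)D}(E_X)}{S_{X,(1-\beta)D}(E_X)}=\frac{A_{Y,B+M}(E)+O(\beta)}{S_{Y,B+M}(E)+O(\beta)}<1
\]
for $\beta$ small, since the $\beta=0$ ratio is $<1$ strictly.

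The hard part will be making the pullback of valuations and the volume asymptotics precise in the presence of the moduli part $M$, which is only pseudo-effective (not necessarily semi-ample or even nef on a fixed model), so $\phi^*(K_Y+B+M)$ must be interpreted via a common resolution and a careful choice of representative; controlling $\vol(-K_X-(1-\beta)D-tE_X)$ then requires a Fujita-type approximation / restricted-volume argument over $Y$ rather than a naive intersection computation, and one must ensure the error terms are genuinely $O(\beta)$ uniformly in $t$ over the relevant range $[0,\tau_\beta(E_X)]$ (note $\tau_\beta(E_X)\to\tau_0$ which is finite since the limiting class is a pullback from $Y$ and $E_X$ is horizontal). A clean way to organize this: first treat the $\beta=0$ ``boundary'' case where $-K_X-D=\phi^*L$ and show $S_{X,D}(E_X)=S_{Y,B+M}(E)$ exactly by the fiber-integration formula for volumes along $\phi$, then argue semicontinuity/continuity of both $A$ and $S$ in $\beta$ to conclude for small $\beta>0$. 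Finally, I should note that strictly speaking I only need $\le 1$ with the strict violation giving $<1$ in the limit, so even a slightly lossy estimate for $S$ suffices as long as it is asymptotically sharp to first order.
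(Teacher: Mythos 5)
Your overall strategy is the same as the paper's: relate $A$ and $S$ on $X$ to the corresponding quantities on $Y$ via the canonical bundle formula and a Fujita-approximation estimate for $\vol(-K_X-(1-\beta)D-tE)$ as $\beta\to 0$, and play this off against $\delta(X,(1-\beta)D)\ge 1$ coming from the KEE hypothesis; the contrapositive packaging is an inessential difference. But one step, as written, does not hold and is exactly where the key idea is hidden: the assertion that a ``horizontal lift'' $E_X$ of $E$ satisfies $A_{X,D}(E_X)=A_{Y,B+M}(E)$ (up to normalization). The pullback $\phi'^*E$ on the fiber-product model is in general not prime, and for an \emph{arbitrary} prime divisor $E_X$ over $X$ whose center dominates $E$ one only gets $A_{X,D}(E_X)\ge \operatorname{ord}_{E_X}(\phi'^*E)\cdot A_{Y,B+M}(E)$ --- the wrong direction for your argument, which needs the log discrepancy upstairs to be \emph{small}. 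Equality holds precisely because the boundary part $B$ is \emph{defined} so that the coefficient of $E$ in $B'$ equals $1-\lct_E(X',G;\phi'^*E)$ (lct over the generic point of $E$, with $(X',G)$ the crepant pullback of $(X,D)$); so you must choose $E_X$ to be a divisor computing this lct, and then carry the factor $c=\operatorname{ord}_{E_X}(\phi'^*E)$ consistently through both $A$ and $S$ (harmless, since $\operatorname{ord}_{E_X}$ on sections pulled back from $Y$ is $c\cdot\operatorname{ord}_E$ and the ratio $A/S$ is scale-invariant). The paper's proof avoids choosing such an $E_X$ altogether: it shows $\delta\ge 1$ forces $(X',G_\beta+S_{X,(1-\beta)D}(E)\cdot\phi'^*E)$ to be lc, passes to the limit $\beta\to 0$ using the liminf bound \eqref{eq:limS}, and reads off $S_{Y,B+M}(E)\le\lct_E(X',G;\phi'^*E)=A_{Y,B+M}(E)$.

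Two smaller points. First, your claimed asymptotic \emph{equality} $S_{X,(1-\beta)D}(E_X)=S_{Y,B+M}(E)+O(\beta)$ is both unproved (the upper bound would require showing that all relevant sections essentially come from $Y$ up to the $\beta D$ perturbation, which you never address) and unnecessary: only the one-sided bound $\liminf_{\beta\to 0}S_{X,(1-\beta)D}\ge S_{Y,B+M}(E)$ is needed, and this is exactly what the paper establishes via Fujita approximation on $Y'$, the expansion $\vol(-K_X-(1-\beta)D)=\binom{n}{r}\beta^r(L^{n-r})(D^r\cdot F)+O(\beta^{r+1})$, and Fatou's lemma --- so the ``hard part'' you flag is the right one, but you should commit to the lower bound rather than an equality. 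Second, ``KEE $\Rightarrow$ uniform K-stability $\Rightarrow\delta>1$'' is false in the presence of continuous automorphisms (cf.\ Proposition \ref{proposition:KEE-for-2-special}, where a KEE metric coexists with $\delta=1$); what the KEE hypothesis does give, and all you need, is $\delta(X,(1-\beta)D)\ge 1$, which your strict inequality $A/S<1$ for small $\beta$ already contradicts.
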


\begin{proof}
Let $E$ be a prime divisor over $Y$.
Let $\pi_Y :Y'\to Y$ be a proper birational morphism that extracts $E$ as a Cartier divisor. Let $X'$ be the normalization of the main component of $X\times_Y Y'$ with projections $\pi_X :X'\to X$ and $\phi':X'\to Y'$. Let $L'=\pi_Y^*L$ and $D'=\pi_X^*D$. 
Then or any ample line bundle $L_X$ on $X$, we set
\[
\vol(L_X-tE):=\vol(\pi_X^* L_X- t\phi'^{*}(E))
\]
where $n=\dim X$. We define the expected vanishing order $S_{X,(1-\beta)D}(E)$ of the log Fano pair $(X,(1-\beta)D)$ along $E$ by
\[
S_{X,(1-\beta)D}(E)=\frac{1}{\vol(-K_X-(1-\beta)D)}\int_0^\infty \vol(-K_X-(1-\beta)D)-tE) \rd t.
\]
We claim that
\begin{equation} \label{eq:limS}
    \liminf_{\beta\to 0+} S_{X,(1-\beta)D}(E) \ge S_{Y,B+M}(E).
\end{equation}
Let $r=\dim X - \dim Y\ge 1$, let $C=\binom{n}{r}$ and let $F$ be a general fiber of $\phi$. As $(X,D)$ is asymptotically log Fano, $\beta D\sim_{\phi.\bQ} -(K_X+(1-\beta)D)=\phi^*L+\beta D$ is $\phi$-ample for some $0<\beta\ll 1$ and we have 
\[
\vol(-K_X-(1-\beta)D)=\left( (-K_X-(1-\beta)D)^n \right) = C\beta^r ((\phi^*L)^{n-r}\cdot D^r) + O(\beta^{r+1})
\]
since $\phi$ has relative dimension $r$. As $L$ is ample and $D$ is $\phi$-ample, it is easy to see that $((\phi^*L)^{n-r}\cdot D^r)=(L^{n-r}) (D^r\cdot F)>0$, hence
\begin{equation} \label{eq:vol(L+beta D)}
    \vol(-K_X-(1-\beta)D)=C\beta^r (L^{n-r}) (D^r\cdot F) + O(\beta^{r+1}).
\end{equation}

For any $t\ge 0$ such that $\vol(L-tE)>0$ and any $\epsilon>0$, by Fujita's approximation theorem (see e.g. \cite[Theorem D]{LM09}) we may assume that (after possibly replacing $\pi_Y$ by another birational morphism) there exists $\bQ$-divisors $A$ and $N$ on $Y'$ such that $A$ is ample, $N$ is effective, $L'-tE=A+N$ and $\vol(A)=(A^{n-r}) > \vol(L-tE)-\epsilon$. As $D$ is $\phi$-ample, $D'$ is $\phi'$-ample, thus $\phi'^*A+\beta D'$ is ample for sufficiently small $\beta>0$. It follows that
\begin{align*}
    \vol(-K_X-(1-\beta)D-tE) & \ge \vol(\phi'^*(L'-tE)+\beta D') \\
     & \ge \vol(\phi'^*A+\beta D') \\
     & = \left( (\phi'^*A+\beta D')^n \right) = C\beta^r (A^{n-r}) (D^r\cdot F) + O(\beta^2)
\end{align*}
where the last equality follows from the projection formula and the ampleness (resp. $\phi$-ampleness) of $A$ (resp. $D')$ as before. In particular, we have
\begin{align*}
\liminf_{\beta\to 0+} \beta^{-r}\vol(-K_X-(1-\beta)D-tE) & \ge C\cdot (A^{n-r}) (D^r\cdot F) \\
 & > C \cdot \left( \vol(L-tE)-\epsilon \right) (D^r\cdot F).
\end{align*}
As this holds for all $\epsilon>0$, we obtain
\[
\liminf_{\beta\to 0+} \beta^{-1}\vol(-K_X-(1-\beta)D-tE) \ge C \cdot \vol(L-tE)\cdot (D^r\cdot F).
\]
Therefore by Fatou's lemma we see that
\begin{equation} \label{eq:lim-integral}
    \liminf_{\beta\to 0+} \frac{1}{\beta^r} \int_0^\infty \vol(-K_X-(1-\beta)D-tE) \rd t \ge C \cdot (D^r\cdot F) \int_0^\infty \vol(L-tE) \rd t.
\end{equation}
The claimed inequality \eqref{eq:limS} then follows by combining \eqref{eq:vol(L+beta D)} and \eqref{eq:lim-integral} as $L\sim_\bQ -(K_Y+B+M)$.

We now proceed to show that $(Y,B+M)$ is K-semistable, i.e. $A_{Y,B+M}(E)\ge S_{Y,B+M}(E)$ for all prime divisors $E$ over $Y$. We keep the notation as above. By construction (see \cite{Kol07}), after possibly replacing the birational morphism $\pi_Y:Y'\to Y$, we may assume that if we write $K_{Y'}+B'+M'\sim_\bQ \pi_Y^*(K_Y+B+M)$, then $M'$ is nef and the coefficient of $E$ in $B'$ is $1-\lct_E(X',G;\phi'^*E)$ where $(X',G)$ is the crepant pullback of $(X,D)$ and the lct is taken only over the generic point of $E$. In particular, $A_{Y,B+M}(E)=\lct_E(X',G;\phi'^*E)$.
Since $(X,(1-\beta)D)$ admits KEE metric for all sufficiently small cone angle $\beta>0$, we have $\delta(X,(1-\beta)D)\ge 1$. In particular, if $(X',G_\beta)$ is the crepant pullback of $(X,(1-\beta)D)$, then $(X',G_\beta + S_{X,(1-\beta)D}(E)\cdot \phi'^*E)$ is lc. Letting $\beta\to 0$ and using \eqref{eq:limS}, we deduce that $(X',G+S_{Y,B+M}(E)\cdot \phi'^*E)$ is lc, hence $S_{Y,B+M}(E)\le \lct_E(X',G;\phi'^*E)=A_{Y,B+M}(E)$ and we obtain $\beta_{Y,B+M}(E)\ge 0$ as desired.
\end{proof}

Unfortunately, the example from Corollary \ref{corollary:delta-upper-bound}(1) shows that only assuming  K-semistability of the base is still not enough for Conjecture \ref{conjecture:CR-conj} to be true. So it seems to to the authors that the existence of KEE metrics on a asymptotically log Fano pair is a subtle problem and the condition $(K_X+D)^{\dim X}=0$ is only necessary. More complicated structures, such as the fibration to the ample model of $-(K_X+D)$, should be taken into consideration.


\begin{ack}
We thank Ivan Cheltsov for suggesting us to write down this 
article. K.F.\ is supported by KAKENHI Grant number 18K13388.
K.Z. is supported by the China post-doctoral grant BX20190014.
\end{ack}

\bibliography{ref}
\bibliographystyle{alpha}







\end{document}